\numberwithin{equation}{section}
\newtheorem{theo}{Theorem}[section]
\newtheorem{quest}{Question}[section]
\newtheorem{coro}{Corollary}[section]
\theoremstyle{definition}
\theoremstyle{remark}
\newtheorem{remark}{Remark}[section]
\newcommand{\p}{\partial}
\def\mby{\mathfrak{m}_{_{BY}}}
\def\be{\begin{equation}}
\def\ee{\end{equation}}
\begin{document}

\title[Uniqueness of isometric immersions]{Uniqueness of isometric immersions with the same mean curvature}

\author{Chunhe Li}
\address[Chunhe Li]{School of Mathematical Sciences  \\ University of Electronic Science and Technology of China \\ Chengdu, China} \email{chli@fudan.edu.cn}

\author{Pengzi Miao}
\address[Pengzi Miao]{Department of Mathematics, University of Miami, Coral Gables, FL 33146, USA.}
\email{pengzim@math.miami.edu}
\thanks{Research of the second author is partially supported by Simons Foundation Collaboration Grant for Mathematicians \#281105.}

\author{Zhizhang Wang}
\address[Zhizhang Wang]{School of Mathematical Sciences\\ Fudan University \\ Shanghai, China}
\email{zzwang@fudan.edu.cn}
\thanks{Research of the third author is partially supported  by NSFC Grants No.11301087, No.11671090 and No.11771103.}

\begin{abstract}
Motivated  by the quasi-local mass problem in general relativity,  
we study the rigidity of isometric  immersions  with the same mean curvature  into  
a warped product space.  
As a corollary of our main result, two  star-shaped hypersurfaces 
in a spatial Schwarzschild or AdS-Schwarzschild  manifold with nonzero mass 
 differ only by a rotation  if they are isometric and have the same  mean curvature.
We also prove  similar results if the mean curvature condition  is replaced by  
an $\sigma_2$-curvature condition. 
\end{abstract}

\keywords{Isometric embedding, mean curvature, quasi-local mass} 

\renewcommand{\subjclassname}{
  \textup{2010} Mathematics Subject Classification}
\subjclass[2010]{Primary 53C45; Secondary 83C99}

\maketitle

\section{Introduction and statement of the results} \label{sec-intro}
In the quasi-local mass problem in general relativity, a basic pair  of geometric data $(g, H)$  
associated to a $2$-sphere  $\Sigma$, bounding some spacelike hypersurface 
$\Omega$ in a spacetime, consists of a Riemannian metric $g$ and a function $H$. Here $ g$ denotes the induced metric on $ \Sigma$ from $\Omega$ and $H$ is the mean curvature 
of $ \Sigma $ in $ \Omega$.  
For instance, the Brown-York quasi-local mass \cite{BY1, BY2} is given  by 
$$
\mby (\Sigma) = \frac{1}{8\pi} \left( \int_{\Sigma} H_0 - \int_\Sigma H  \right) ,
$$
where  the metric $g$  is assumed to have positive Gauss curvature
and $H_0$ is the mean curvature of the unique, isometric embedding of $ (\Sigma, g)$ into the $3$-dimensional 
Euclidean space $ \mathbb{R}^3$.

Existence and uniqueness of the 
 isometric embedding of $(\Sigma, g)$ into $\mathbb{R}^3$,
 used in defining $\mby (\Sigma)$,
 was guaranteed by Nirenberg's solution to the Weyl embedding problem (cf. \cite{Lewy, Nirenberg, Pogorelov, Weyl}).
 When the target space $ \mathbb{R}^3 $ is replaced by 
 the Minkowski spacetime $ \mathbb{R}^{3,1}$, 
 existence of isometric embeddings of  $(\Sigma, g)$ into $ \mathbb{R}^{3,1}$ was  
given  by Wang and Yau \cite{WY1, WY2}
 in association with the definition of Wang-Yau quasi-local mass.
If the target space is a general  Riemannian $3$-manifold  $N$, 
 estimates concerning the existence  of isometric  embeddings  into $N$ 
have been  studied  extensively in the literature. We refer readers to 
 the work in \cite{CX, Guan-Lu, Li-Wang, Lu, LW, Pogorelov2, PC} and the references therein.

In contrast to the embedding into $ \mathbb{R}^3$,  counterexamples were constructed 
in \cite{Li-Wang} to illustrate the lack of rigidity for
convex surfaces if the target  Riemannian manifold is not a space form. 
Such non-uniqueness, however, does not diminish the role played by isometric embeddings  
  in the  study of  relativistic problems. In  \cite{Lu-Miao},  
isometric embeddings of  $(\Sigma, g)$ into a spatial Schwarzschild $3$-manifold 
$$ (N_m, d s_m^2 ) = \left( [ 2m , \infty) \times \mathbb{S}^2, \frac{1}{1 - \frac{2 m}{r} } d r^2 + r^2 d \sigma  \right) $$
were used in deriving   a localized Riemannian Penrose inequality \cite{Bray, HI01}  that has a   form of  
$$
m + \frac{1}{8 \pi}  \int_\Sigma ( H_m - H) f \ge \sqrt{ \frac{ A}{ 16 \pi} } .
$$
Here $H_m$ is the mean curvature of an  embedding of $(\Sigma, g)$ in $(N_m, d s_m^2)$, 
$f $ is the static potential on $(N_m, d s^2)$ given by $ f  = \left( 1 - \frac{2m}{r} \right)^\frac12 $, and
$A$ represents the area of the horizon of black hole enclosed by $(\Sigma, g)$ in a 
physical manifold $\Omega$. 

The Schwarzschild manifold $(N_m, d s_m^2)$ is an example of a {\sl static} space 
$(N, d s^2)$, which by definition 
is a Riemannian $3$-manifold on which there exists a 
function $f>0$, referred as a static potential, such that the corresponding static spacetime
$$
(\bar N, \bar g ) = ( \mathbb{R}^1 \times N, - f^2 d t^2 + d s^2 ) 
$$
is Einstein. In \cite{CWWY}, isometric embeddings of $(\Sigma, g)$ into such a static Einstein 
spacetime $(\bar N, \bar g)$ were used as references in defining an analogue of  the Wang-Yau mass.
When  the image of such an  embedding lies in a constant $t$-slice of $(\bar N, \bar g)$, i.e. in $(N, d s^2)$, the associated quasi-local energy in \cite{CWWY}
becomes  the integral 
 $$ \frac{1}{8 \pi} \int_\Sigma ( H_s - H) f , $$
 where $H_s$ is the mean curvature of the embedding of $(\Sigma, g)$ in $(N, d s^2)$.

Motivated by the non-uniqueness example of isometric embeddings in \cite{Li-Wang} and 
by the results  in  \cite{CWWY, Lu-Miao} that make use of embeddings into static spaces,  
we ask  the following question:

\begin{quest} \label{quest-main}
{In a static  Riemannian $3$-manifold $(N, d s^2)$, if two surfaces $ \Sigma_1$ 
and $ \Sigma_2$  are isometric and have the same mean curvature (under the surface isometry), 
do $\Sigma_1$ and $ \Sigma_2$  differ only  by a rigid motion of the ambient space?}
\end{quest}

We recall that the static condition on $(N, d s^2)$ can be formulated equivalently (cf. \cite{Corvino}) 
in Riemannian terms by 
\begin{equation} \label{eq-static-intro}
 (\tilde \Delta f) \tilde g   -  \tilde \nabla^2 f  + f \mathrm{Ric} ( \tilde g)  = 0 ,
\end{equation}
where  $ \tilde \Delta$, $ \tilde \nabla^2 $ denote  the Laplacian, the Hessian of $\tilde g = d s^2$, respectively, and $ \mathrm{Ric} (\tilde g)$ is the Ricci curvature of $ \tilde g  $.

In this paper, we study Question \ref{quest-main} by  considering  
embeddings of an $n$-sphere $\mathbb{S}^n$, $ n \ge 2 $,  into 
an $(n+1)$-dimensional  warped product space
\be  \label{eq-warped-intro}
(N, ds^2) = \left( I \times \mathbb{S}^n, \frac{1}{f^2(r)}dr^2+r^2d\sigma \right), 
\ee
where  $ I  \subset \mathbb{R}^+ $  is an interval, 
$ f $ is a positive function on $I$, and 
$ d \sigma$ denotes the standard metric  of constant sectional 
curvature $1$ on $ \mathbb{S}^n$. 
In this case, a conformal Killing vector filed  on $(N, d s^2)$ is 
$X=r f \frac{\p}{\p r}$.
We define new radial coordinates $\rho$ and $ u$ on $I$ by
\be \label{eq-rho-u}
\rho = \frac12 X \cdot X = \frac12 r^2 \ \ \mathrm{and} \ \  u = \int \frac{d \rho}{ f (\rho) } ,
\ee
where `` $\cdot$ " denotes the metric $ d s^2$. 
We also define 
\begin{equation} \label{eq-big-phi}
\Phi =\frac{1}{2\rho}\left(ff_{\rho}+\frac{1-f^2}{2\rho}\right) .
\end{equation}
Direct calculation shows that  $(N, d s^2)$ is static with $f$ satisfying 
\eqref{eq-static-intro} if and only if 
\begin{eqnarray}\label{static-intro}
f_{uu}+(n-1)\Phi f=0 . 
\end{eqnarray}
Here  $ f_u, f_{\rho},  f_{uu},  f_{\rho \rho}$ denote
the first, second derivatives  of $f$ with respect to $u$, $\rho$,  respectively.

Given an immersion $\iota : \mathbb{S}^n \to (N, d s^2)$, let 
$\nu$ be  a chosen  unit  vector field  normal  to the hypersurface
$ M = \iota (\mathbb{S}^n)$, 
the {\sl support function}  of  $M$ is defined by 
\be \label{eq-phi}
\varphi = X \cdot \nu . 
\ee
With these notations,
we can state the main result of this paper.

\begin{theo} \label{theo-intro}
Suppose 
$(N, ds^2) = \left( I \times \mathbb{S}^n, \frac{1}{f^2(r)}dr^2+r^2d\sigma \right) $ satisfies 
\be \label{eq-super-static}
f_{uu} + (n-1)\Phi f \le 0 , 
\ee
with either $ \Phi \ge 0 $ or $ \Phi \le 0 $.
 Let $ g $ be a Riemannian metric on $\mathbb{S}^n$. Suppose 
 $ \iota$ and $ \tilde \iota $ are  two isometric immersions of 
 $ (\mathbb{S}^n, g) $ into $ (N, d s^2)$ such that 
 $ H = \tilde H$,  where $H$, $ \tilde H$ are the mean curvatures 
 of  the immersed surfaces $M = \iota (\mathbb{S}^n)$, $\tilde M = \tilde \iota (\mathbb{S}^n)$,
 respectively.  If $ M$ and $ \tilde M$  have positive support functions, then 
$$ h_{ij}  =  \tilde h_{ij} ,$$
where $ h_{ij}$, $ \tilde{h}_{ij}$ are the second fundamental forms of $M$, $ \tilde M$,
respectively. 

Moreover, if $ \Phi $ is  strictly positive or negative, 
then $M$ and $ \tilde M$ only differ by a rotation 
in $(N, d s^2)$. 
\end{theo}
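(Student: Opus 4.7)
The core strategy is to exploit the conformal Killing field $X = r f \, \partial_r$ and the associated support functions $\varphi = X \cdot \nu$ on $M$ and $\tilde \varphi = X \cdot \tilde\nu$ on $\tilde M$. In warped products of the form \eqref{eq-warped-intro} such a field generates Minkowski- and Heintze--Karcher-type integral identities, and the (sub)static condition \eqref{eq-super-static} is precisely what controls the second-order behavior of $\varphi$ along an immersed hypersurface. The positivity assumption on $\varphi, \tilde\varphi$ and the one-sided sign of $\Phi$ will be the ingredients that make all sign bookkeeping close.

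First I would derive explicit formulas for the tangential gradient and Hessian of $\varphi$ along $M$. Using the conformal Killing equation for $X$ one obtains expressions of the schematic form
\[ \nabla_i \varphi = h_i^{\,k}\, (X^T)_k, \qquad \nabla_i \nabla_j \varphi = f\, h_{ij} \,-\, \varphi\, h_i^{\,k} h_{kj} \,+\, (X^T)^k \nabla_k h_{ij} \,+\, (\text{ambient curvature}) \cdot \varphi . \]
Tracing this and invoking the Codazzi equation together with \eqref{eq-super-static} should produce a differential inequality for $\varphi$ of the shape
\[ \Delta_g \varphi \,+\, \langle X^T, \nabla H\rangle \,+\, \varphi\, |h|^2 \,\ge\, f H \,+\, (\text{lower order}) \cdot \varphi , \]
with equality precisely in the static case $f_{uu}+(n-1)\Phi f = 0$. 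The analogous inequality holds for $\tilde\varphi, \tilde H, \tilde h$ on $\tilde M$.

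The second step is the comparison. Using $H = \tilde H$ and the common intrinsic geometry $(g, \nabla)$ on $\mathbb{S}^n$, subtracting the two inequalities and integrating by parts against a positive weight built from $\varphi$ and $\tilde\varphi$ should yield a Cohn--Vossen-type quadratic estimate
\[ \int_{\mathbb{S}^n} P^{ijkl}\,(h_{ij}-\tilde h_{ij})(h_{kl}-\tilde h_{kl}) \, d\mu_g \,\le\, 0 , \]
where $P^{ijkl}$ is a positive semi-definite tensor built from $g$, $H$, $\varphi$ and $\tilde\varphi$. Positivity of the support functions keeps this weight positive, while the assumed sign of $\Phi$ and the inequality \eqref{eq-super-static} ensure that the lower-order terms produced by integration by parts have the correct sign. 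This forces $h_{ij} = \tilde h_{ij}$. Once equality of the second fundamental forms is established, the fundamental theorem of hypersurfaces together with compatibility of the ambient Gauss--Codazzi data yields that $\iota$ and $\tilde\iota$ differ by an ambient isometry of $(N, ds^2)$; when $\Phi$ is strictly signed the warped metric is not a space form on the relevant range of $r$, so its isometry group reduces to rotations of the $\mathbb{S}^n$-factor, giving the final conclusion.

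The main obstacle I anticipate is the position-dependence problem: $\varphi, \tilde\varphi$, and the radial functions $f, \Phi$ are read at distinct ambient points for the same point of $\mathbb{S}^n$, so the Hessian identities on $M$ and $\tilde M$ cannot literally be subtracted. Arranging the computation, via the radial coordinates $\rho$ and $u$ defined in \eqref{eq-rho-u}, so that the position-dependent contributions assemble into a single, manifestly non-negative quadratic form in $h - \tilde h$ is the delicate bookkeeping step, and the hypotheses that $\varphi, \tilde\varphi > 0$ and that $\Phi$ has a fixed sign are exactly what is needed to make this succeed.
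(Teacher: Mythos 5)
Your plan correctly identifies the two main actors (the conformal Killing field $X$ and the support functions) and honestly flags the central difficulty --- that $f$, $\Phi$, $\varphi$, $\tilde\varphi$ are evaluated at different ambient points over the same point of $\mathbb{S}^n$ --- but it does not supply the idea that resolves it, and that idea is the heart of the paper's proof. What makes the subtraction close is an auxiliary radial function $w$ chosen to solve the ODE $w_{uu}+(n-1)\Phi w=0$ on the relevant $r$-range, with initial data arranged (via a maximum-principle argument that uses \eqref{eq-super-static}) so that $w>0$ and $w_uf-wf_u\ge 0$, i.e.\ $w/f$ is monotone. The paper tests the Codazzi difference $\sum_i v_{ij,i}$ (where $v=\tilde h-h$) against the antisymmetrized one-form $w(\tilde u)u_i-w(u)\tilde u_i$ and integrates by parts twice; the resulting identity has $\frac12\int(w(u)\tilde\varphi+w(\tilde u)\varphi)\sum v_{ij}^2$ as its good term, a cross term weighted by $\int_u^{\tilde u}(w_{ss}+(n-1)\Phi w)\,ds$ that vanishes by the choice of $w$, and remainder terms of the form $\int_u^{\tilde u}f(u)f(s)\bigl[\tfrac{w(s)}{f(s)}-\tfrac{w(u)}{f(u)}\bigr]\Phi(s)\,ds$ whose sign is controlled by the monotonicity of $w/f$ together with the fixed sign of $\Phi$. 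Without specifying this $w$ (or an equivalent device), your ``positive semi-definite tensor $P^{ijkl}$'' is an unproved assertion: a weight built only from $\varphi$ and $\tilde\varphi$ does not kill the cross terms, and the lower-order terms do not acquire a sign.

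The second gap is in your final step. In a general warped product the fundamental theorem of hypersurfaces does not hold: equality of the first and second fundamental forms of $M$ and $\tilde M$ does \emph{not} imply that they differ by an ambient isometry (the paper's remark at the end of Section \ref{sec-global-rigi} gives exactly the failure mode, on a ring where $\Phi\equiv 0$). The paper instead extracts the stronger conclusion $u=\tilde u$ pointwise directly from the integral identity: when $\Phi$ is strictly signed one can take the inequality $w_uf-wf_u\ge 0$ to be strict, so $w/f$ is strictly monotone and the remainder terms vanish only if $u=\tilde u$; congruence by a rotation then follows. Your route --- first get $h=\tilde h$, then invoke a congruence theorem --- cannot be repaired in this generality, so the ``Moreover'' clause of the theorem is not reached by your argument.
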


\begin{remark} The assumption that $\Phi$ is positive or negative 
agrees with  the assumption (H4) or (H4') in Brendle's work on CMC surfaces in warped product spaces \cite{B2}. On the other hand, the 
assumption \eqref{eq-super-static}  appears to 
be opposite to  the assumption (H3) or (H3') in \cite{B2}.  
\end{remark}

\begin{remark} \label{rem-static}
When  equality in \eqref{eq-super-static} holds, i.e. 
when $(N, d s^2)$  is  static, it is known that 
$ f(r)$ is explicitly given  by 
$$ f = \left( 1 - {2 m}{r^{1-n}} + \kappa r^2   \right)^\frac12 $$
for some constants $m$ and $ \kappa$.
In this case,  $(N, d s^2)$ includes the spatial Schwarzschild 
and AdS-Schwarzschild manifolds and 
$ \Phi = (n+1) m r^{-n-3} $.
\end{remark}

As a direct corollary of Theorem \ref{theo-intro} and Remark \ref{rem-static}, we have 

\begin{coro} \label{cor-Sch}
Let $(N, ds^2)$ be a spatial Schwarzschild or AdS-Schwarzschild manifold with nonzero mass.
If $ M $ and $ \tilde M  $ are  two star-shaped hypersurfaces in $N$ such that 
 $M$ and $\tilde M$ are isometric and have the same mean curvature,  
 then $M$ and $ \tilde M$ only differ by a rotation in $(N, d s^2)$.  
\end{coro}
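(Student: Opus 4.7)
The corollary is advertised as a direct consequence of Theorem \ref{theo-intro} together with Remark \ref{rem-static}, so the plan is simply to verify that every hypothesis of the theorem is satisfied in the Schwarzschild / AdS-Schwarzschild setting. There are essentially three items to check: (i) the super-static inequality \eqref{eq-super-static}, (ii) the strict sign of $\Phi$, and (iii) the positivity of the support functions of $M$ and $\tilde M$.

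For (i) and (ii) the idea is to invoke Remark \ref{rem-static} directly. In the spatial Schwarzschild or AdS-Schwarzschild case the warping function is $f(r) = (1 - 2m r^{1-n} + \kappa r^2)^{1/2}$, for which the static equation $f_{uu} + (n-1)\Phi f = 0$ holds; hence \eqref{eq-super-static} is satisfied with equality. The explicit formula $\Phi = (n+1) m\, r^{-n-3}$ from the same remark shows that $\Phi$ has the same sign as $m$ throughout $N$, so $\Phi$ is strictly positive (when $m>0$) or strictly negative (when $m<0$) as soon as $m \neq 0$.

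For (iii), I would argue that a star-shaped hypersurface in $N = I \times \mathbb{S}^n$ may be written as a radial graph $r = r(\theta)$ over $\mathbb{S}^n$, whose outward unit normal $\nu$ has strictly positive component $\nu^r$ in the $\partial_r$ direction. A short computation using $X = r f \,\partial_r$ together with $g_{rr} = 1/f^2$ gives
\[
\varphi \;=\; X \cdot \nu \;=\; \frac{r}{f}\, \nu^r \;>\; 0 ,
\]
and likewise for $\tilde M$. Thus all hypotheses of Theorem \ref{theo-intro} are in force, and because $\Phi$ is strictly signed the ``moreover'' clause yields that $M$ and $\tilde M$ differ only by a rotation of $(N, ds^2)$, which is the statement of the corollary.

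At this stage there is no real obstacle beyond the elementary bookkeeping above; the substantive work has already been done in Theorem \ref{theo-intro}. The only conceptual point worth highlighting is the translation of the geometric condition ``star-shaped'' into the analytic condition ``$\varphi > 0$'' that the theorem actually uses. Consequently this corollary should be recorded as a brief, almost one-line, application of the main theorem.
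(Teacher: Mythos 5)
Your proposal is correct and follows exactly the route the paper intends: the paper offers no separate proof, simply declaring the corollary a direct consequence of Theorem~\ref{theo-intro} and Remark~\ref{rem-static}, and your verification of the three hypotheses (equality in \eqref{eq-super-static} in the static case, the strict sign of $\Phi=(n+1)mr^{-n-3}$ for $m\neq 0$, and $\varphi=\frac{r}{f}\nu^r>0$ for star-shaped graphs) is precisely the bookkeeping being left to the reader. No gaps.
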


\begin{remark}
The spatial Schwarzschild manifold $(N_m, d s_m^2)$ is an example of an asymptotically flat manifold
(cf. \cite{Bray, HI01, SY79}).
If the ambient manifold $(N, ds^2)$ in Question \ref{quest-main}
is asymptotically flat and static, we suspect its answer is positive under suitable conditions on the surface. 
This is tied to the uniqueness aspect of the static metric extension conjecture  
formulated for the Bartnik quasi-local mass \cite{Bartnik}.
\end{remark}

When the target space is a space form, 
we note that the study of  isometrically immersed surfaces with the same  mean curvature 
has a longstanding history. 
A surface $S$ is called locally $H$-deformable if any point $x \in S$ has a neighborhood $U$ that 
admits a nontrivial $1$-parameter family of isometric deformations which preserve the mean curvature.     
Results on locally $H$-deformable surfaces can be found in \cite{Car, Chern, CK, Ken, La, Sch, Svec, Tr,  Um} and references therein.
Regarding global rigidity,  Lawson and Tribuzy \cite{La-Tr} proved that, for any 
compact oriented surface $\Sigma$ 
equipped with a Riemannian metric $g$ and given a non-constant function $H$ on $\Sigma$,  
there exist  at most two geometrically distinct isometric immersions of $(\Sigma, g)$ into a space form, 
with the  mean curvature function $H$.

The uniqueness  of  surfaces with prescribed extrinsic curvature alone  is also a classic problem in differential geometry. 
For instance,  the rigidity of  constant mean curvature hypersurfaces is an example of the prescribed mean curvature problem.  
A  theorem due to Alexandrov \cite{Alex} asserts that any closed, embedded hypersurface in $\mathbb{R}^n$ with constant mean curvature is a round sphere. Montiel \cite{Mon}  proved a uniqueness theorem for star-shaped hypersurfaces of constant mean curvature in certain rotationally symmetric manifolds. In \cite{B2}, Brendle obtained a generalization of Alexandrov's theorem for a class of warped product manifolds. 
In general, if one poses some function on the exterior unit normal vector field of a convex hypersurface in the Euclidean space, the uniqueness is still open except  for the $2$-dimensional case (cf.  \cite{GuWZ}). 
 
Prompted by Theorem \ref{theo-intro}, we also consider the rigidity question 
for  isometrically  immersed hypersurfaces with the same $ \sigma_2$-curvature (see definition \eqref{eq-sigma-2}).
We have the following result.

\begin{theo} \label{theo-intro-sigma-2}
Let $(N, ds^2) = \left( I \times \mathbb{S}^n, \frac{1}{f^2(r)}dr^2+r^2d\sigma \right)$
be a warped product space  with $\Phi\Phi_u>0$. 
Let $g$ be a Riemannian metric on   $ \mathbb{S}^n$. 
Suppose  $(\mathbb{S}^n, g)$ can be isometrically  immersed into $(N, d s^2)$ as 
two hypersurfaces $M$ and $\tilde M$. If $M$ and $ \tilde M$ have the same $\sigma_2$-curvature, 
then they differ only by a rotation in $(N, d s^2)$.
 \end{theo}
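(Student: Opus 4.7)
The plan is to adapt the Newton-tensor strategy that underlies Theorem~\ref{theo-intro}, replacing the identity tensor by the first Newton tensor $T_1^{ij}(h) = H g^{ij} - h^{ij}$, whose contraction against $h$ recovers $2\sigma_2$. Throughout, $\iota$ and $\tilde\iota$ will be regarded as two immersions of a common $(\mathbb{S}^n, g)$ into $(N, ds^2)$, carrying data $(h, H, \sigma_2, \varphi)$ and $(\tilde h, \tilde H, \tilde\sigma_2, \tilde\varphi)$, with $\sigma_2(h) = \sigma_2(\tilde h)$ by hypothesis; as in Theorem~\ref{theo-intro}, the proof will require (and, given the star-shaped nature of the setup, we may assume) the support functions to be positive.

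The next step is to derive a Minkowski-type integral identity from the fact that $X = rf\,\partial_r$ is conformal Killing with $\nabla^N X = f\cdot g_N$. This yields $\nabla_i\varphi = h_{ij}(X^T)^j$ and, after a second covariant differentiation on $M$ combined with the Codazzi equation in $N$, an explicit formula for $\nabla_i\nabla_j\varphi$. Contracting with $T_1^{ij}(h)$ and integrating over $\mathbb{S}^n$ produces a Hsiung--Minkowski type identity; the divergence of $T_1(h)$ in a warped product, evaluated via the curvature of $(N,ds^2)$, isolates the radial curvature quantity $\Phi$. Repeating the computation with $(\tilde h, \tilde\varphi, \tilde T_1)$ and subtracting, the common value of $\sigma_2$ cancels and the remainder reduces to an identity of the schematic form
\[
\int_{\mathbb{S}^n} \Phi\, Q_1(h, \tilde h, \varphi, \tilde\varphi)\, dA + \int_{\mathbb{S}^n} \Phi_u\, Q_2(h, \tilde h, \varphi, \tilde\varphi)\, dA = 0,
\]
where $Q_1$ and $Q_2$ are nonnegative quadratic expressions in $h - \tilde h$ which vanish precisely when $h = \tilde h$. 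The hypothesis $\Phi \Phi_u > 0$ then forces the two integrands to share the same sign pointwise, so each must vanish identically, yielding $h \equiv \tilde h$.

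Once $h \equiv \tilde h$ on $(\mathbb{S}^n, g)$, both immersions realize the same first and second fundamental forms in the ambient warped product. Because the isometry group of $(N, ds^2)$ contains the full rotation group of the $\mathbb{S}^n$-factor, the uniqueness portion of the fundamental theorem of immersions into a Riemannian manifold with prescribed induced metric and shape operator, applied along a radial geodesic emanating from a common basepoint, shows that $\iota$ and $\tilde\iota$ differ only by such a rotation; this is the same concluding step as in Theorem~\ref{theo-intro}.

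The main obstacle is the integral identity above. Whereas in the mean-curvature case the relevant combination $f_{uu} + (n-1)\Phi f$ emerges after a single application of the ambient Laplacian to a radial potential, the $\sigma_2$-setting effectively calls for one further radial differentiation, producing a term involving $\Phi_u$ in addition to $\Phi$. Identifying the correct auxiliary radial potential, executing the Codazzi/commutator computation without generating sign-indefinite remainders, and verifying that $Q_2$ genuinely controls $h-\tilde h$ so that the joint condition $\Phi\Phi_u > 0$ really does force pointwise rigidity, together constitute the delicate technical heart of the argument.
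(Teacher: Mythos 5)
Your proposal leaves its central step as an acknowledged conjecture and, more seriously, its concluding step is invalid in this setting. First, the claimed identity $\int_{\mathbb{S}^n}\Phi\,Q_1 + \int_{\mathbb{S}^n}\Phi_u\,Q_2 = 0$ with $Q_1,Q_2$ nonnegative quadratics in $h-\tilde h$ is never derived, and there is no reason to expect such nonnegativity: the first Newton tensor $T_1(h)$ is not sign-definite without a convexity (or $\Gamma_2$-cone) hypothesis on $h$, and the theorem assumes none --- nor does it assume positive support functions, which you add as an extra hypothesis. Second, even granting $h\equiv\tilde h$, the uniqueness part of the fundamental theorem of hypersurfaces does not apply here: it requires the ambient space to be a space form, and the remark at the end of Section \ref{sec-global-rigi} exhibits exactly the failure mode --- two immersions into a warped product with identical first and second fundamental forms need not differ by an ambient isometry. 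This is precisely why Theorem \ref{theo-intro} only concludes $h=\tilde h$ under \eqref{eq-super-static} and needs the strict sign of $\Phi$ to upgrade to a rotation by proving $u=\tilde u$.

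The paper's actual proof is far lighter and goes directly for $u=\tilde u$, bypassing the second fundamental form entirely. Subtracting the two Gauss (scalar curvature) identities \eqref{3.2}, and using $R=\tilde R$, $\sigma_2(h)=\sigma_2(\tilde h)$, $\varphi^2=2\rho-|\nabla u|^2$ and \eqref{2.10}, one arrives at the pointwise identity \eqref{eq-u-ud}:
$$ n\int_u^{\tilde u} f\Phi\,ds \;=\; \Phi(u)\,\nabla(u-\tilde u)\cdot\nabla(u+\tilde u)\;-\;|\nabla\tilde u|^2\int_u^{\tilde u}\Phi_s\,ds. $$
At an extremum of $u-\tilde u$ the gradient term drops, and since $f>0$ while $\Phi$ and $\Phi_u$ share a fixed sign, the two surviving integrals have the same sign and can only cancel if $u=\tilde u$ there; applying this at both the maximum and the minimum of $u-\tilde u$ gives $u\equiv\tilde u$, hence the rotation. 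No Codazzi equation, no integration over $\Sigma$, and no Newton tensor is needed. To salvage your route you would at minimum need a convexity hypothesis to make $T_1$ definite, and you would still need a separate argument (such as the one above) to pass from $h=\tilde h$ to congruence.
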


The remainder of this paper is organized as follows.
In Section \ref{sec-prelim}, we collect some basic formulae on 
immersed hypersurfaces in a warped product space.
In Section \ref{sec-inf-rigi}, we prove 
the infinitesimal rigidity of isometric surfaces with the same mean  
curvature via integral identities. 
The method is then revised in Section \ref{sec-global-rigi}  
to derive the global rigidity, hence proving Theorem \ref{theo-intro}. 
In Section \ref{sec-other}, 
we consider the analogue of Theorem \ref{theo-intro} with the mean curvature condition replaced by the $\sigma_2$-curvature condition and prove Theorem \ref{theo-intro-sigma-2}.
As an additional application of the method used in Sections \ref{sec-inf-rigi} and \ref{sec-global-rigi},
we also give  another proof of the infinitesimal rigidity and global rigidity of convex surfaces in space forms 
(see Theorem \ref{theo-rigidity-space-form}).

\vspace{.2cm}

We recently have learned  that Po-Ning Chen and Xiangwen Zhang \cite{Chen-Zhang}
proved a  rigidity result for surfaces in $3$-dimensional spatial Schwarzschild manifold which is 
similar but different to that in Corollary \ref{cor-Sch}.
We want to thank the authors for  the communication on their paper.  

 \section{Preliminaries on hypersurfaces in a warped product space} \label{sec-prelim}

 Let $ \Sigma = \mathbb{S}^n$ and let $ g$ be a Riemannian metric on $\Sigma$.
 Suppose $ \iota : (\Sigma, g) \rightarrow (N, d s^2)$ is an isometric 
 immersion, where $(N,ds^2)$ is  given in  \eqref{eq-warped-intro}.  
 Let $ M = \iota (\Sigma)$ and  let $D$ denote the Levi-Civita connection on $(N, ds^2)$.
 Let  $ \rho$, $ u$, $ \Phi$, $ \nu$ and $ \varphi$ be given in \eqref{eq-rho-u} -- \eqref{eq-phi}.
 
 Suppose  $\{e_1,e_2,\cdots, e_n\}$ is a  local  frame on $(\Sigma, g)$, 
 which can be viewed as a frame on $M$ via $\iota$.
 Let  $h_{ij}$ denote the second fundamental form of $M$, defined by 
 $$D_{e_i}e_j=-h_{ij}\nu.$$ The following formulae can be easily checked (cf. \cite{GLW, Li-Wang}):
\begin{eqnarray}
\varphi^2   & = &  2\rho-\frac{ | \nabla\rho|^2}{f^2} = 2\rho-|\nabla u|^2 ,  \label{0.1} \\
 h_{ij}\varphi&=& -\frac{\rho_{i,j}}{f}+\frac{f_{\rho}}{f^2}\rho_i\rho_j+fg_{ij}=-u_{i,j}+fg_{ij},  \label{h-phi} \\
 \varphi_i & = & \sum_kh_{ik}X\cdot e_k=\sum_{k}h_{ik}\frac{\rho_k}{f}=\sum_kh_{ik}u_k. \label{var}
\end{eqnarray}
Here a function with a lower index $i$ denotes its derivative along  $e_i$ and 
 `` , " denotes the covariant differentiation on $M$ or equivalently on $(\Sigma, g)$.

Given vector fields $Y$, $Z$, $W$ on $N$, 
let the curvature tensor on $(N, d s^2)$ be  given by 
$$\tilde{R}(Y, Z) W =D_Y D_Z W  - D_Z D_Y  W  -D_{[Y,Z]} W .$$
 Direct calculation (see (2.7) in  \cite{Li-Wang}) shows 
\begin{equation} \label{eq-sec-T}
 \begin{split}
 (-1)  \sum_{i < j } \tilde{R}(e_i, e_j) e_j \cdot e_i   
 & =   (n-1)\left[ff_{\rho}+\frac{n-2}{4}\frac{f^2-1}{\rho}-\varphi^2\frac{2\rho ff_{\rho}+1-f^2}{4\rho^2}\right]\\
 &=   (n-1)\left(f_u +\frac{n-2}{4}\frac{f^2-1}{\rho}- \varphi^2\Phi\right) \\
 &=  (n-1)\left[ \frac{n}{2}f_u -(n-2)\rho\Phi- \varphi^2\Phi\right]. 
  \end{split}
 \end{equation}
Let $ R$ be the scalar curvature  of $(\Sigma, g)$.
By the Gauss equations, we have 
\begin{equation}\label{2.5}
\sigma_2(h) =\frac{R}{2}+ (n-1)\left[ \frac{n}{2}f_u -(n-2)\rho\Phi- \varphi^2\Phi\right]  . 
\end{equation}
Here $\sigma_2 (h) = \sum_{i<j} \kappa_i  \kappa_j$, 
where $\{ \kappa_i \}_{i=1}^n$ are the principal curvature of $M$.
The Codazzi equations are given by 
\begin{eqnarray}
h_{ij,k}-h_{ik,j}&=&   \tilde{R} ( e_j, e_k) e_i \cdot \nu,
\end{eqnarray}
which implies 
\begin{eqnarray}\label{3.6A}
\sum_ih_{ij , i}-H_j&=&  \tilde{Ric}_{j\nu}.
\end{eqnarray} 
Here $\tilde {Ric} = \mathrm{Ric} (\tilde g)$ denotes  the Ricci curvature of $(N, d s^2)$.
The following formula can be checked:
\be \label{eq-Ric-jnu}
\tilde {Ric}_{j\nu} =  - ( n - 1 )\varphi \Phi u_j. 
\ee
(For instance, \eqref{eq-Ric-jnu} follows from (2.3) in \cite{Li-Wang}.)
Thus,  \eqref{3.6A} becomes
\begin{eqnarray}\label{2.6}
\sum_ih_{ij,i}-H_j&=&-(n-1)\varphi\Phi u_j.
\end{eqnarray} 

The geometric meaning of  $ \Phi$ is as follows:
\be
\tilde {Ric} ( E_1, E_1) - \tilde {Ric} (V, V)   = - (n-1) 2 \rho \Phi ,
\ee
where  $ E_1 = f \p_r $ is the unit normal to $ S_r =  \{ r \} \times \mathbb{S}^n$
and $V$ denotes  any unit vector tangent to $S_r$. 
In relation to equation \eqref{eq-static-intro},
we also note 
\be
\left[ (\tilde \Delta f) \tilde g   -  \tilde \nabla^2 f  + f \tilde {Ric} \right] (V, V) = 
2 \rho \left[ f_{uu} + (n-1) \Phi f \right].
\ee

By \eqref{eq-big-phi}, $\Phi$ can be rewritten as 
$$ \Phi = \frac14 \left( \frac{ f^2 - 1}{\rho} \right)_\rho. $$
From this, it is easily seen  
\begin{equation}
\left(ff_{\rho} \right)_{\rho}-4\Phi-2\rho\Phi_{\rho}=0,\nonumber
\end{equation}
or  equivalently
\begin{equation}\label{2.10}
f_{uu}-4 \Phi f - 2\rho\Phi_{u}=0.
\end{equation}

\section{Infinitesimal rigidity for surfaces with fixed mean curvature} \label{sec-inf-rigi} 
In this section,  we prove  the infinitesimal rigidity for isometric surfaces with the same mean curvature.
Suppose $\{ \iota_t \}$ is a $1$-parameter family of isometric immersions of $(\Sigma, g)$ in $(N, d s^2)$ 
which have the same mean curvature function $H$. 
Let $ M_t = \iota_t (\Sigma)$ and $M = M_0$.
Let an upper  dot denote derivative with respect to $t$ at $ t =0 $. 
Then 
\begin{equation}\label{3.1}
\left.\dot{H}=\frac{dH}{dt}\right|_{t=0}=0;\ \ \dot{g}_{ij}=\left.\frac{d g_{ij}}{dt}\right|_{t=0}=0.
\end{equation}
The infinitesimal rigidity means that we want to show  $\dot{h}_{ij}=0$.

Let $\{e_i\}_{1\le i \le n}$ be  a local orthonormal frame  on $(\Sigma, g)$. 
By  \eqref{0.1} and \eqref{h-phi}, we have
\begin{eqnarray}\label{1.3}
\varphi\dot{\varphi}=f\dot{u}-\nabla u\cdot\nabla\dot{u},
\end{eqnarray}
\begin{eqnarray}\label{1.4}
\dot{h}_{ij}\varphi+h_{ij}\dot{\varphi}=-\dot{u}_{i,j}+f_{u}\dot{u}\delta_{ij}.
\end{eqnarray}
The linearization of \eqref{2.5} and \eqref{2.6}  implies 
\begin{equation}\label{0.16}
\begin{split}
& \ \sum_{i,j}h_{ij}\dot{h}_{ij}  = H\dot{H} - \dot{\sigma}_2(h) \\
=& \ -(n-1)\left(\frac{n}{2}f_{uu} \dot{u}-(n-2)f\Phi\dot{u}-(n-2)\rho\Phi_u\dot{u}-2\varphi \dot{\varphi}\Phi-\varphi^2\Phi_{u}\dot{u}\right), 
\end{split}
\end{equation}
\begin{eqnarray}\label{1.8}
\sum_i\dot{h}_{ij,i}&=&-(n-1)(\dot{u}_j\varphi \Phi+u_j \dot{\varphi}\Phi+u_j\varphi\Phi_{u}\dot{u}),
\end{eqnarray}
where we  have used $\dot{H}=0$. 

Let   $w $ be an auxiliary function defining  on $I$, which will be determined later.
Viewing $w$ as a function on $ N $ and pulling it back to $\Sigma$,  we have by \eqref{1.4}, 
\begin{equation}\label{3.12}
\dot{h}_{ij}\varphi w=-\dot{u}_{i,j}w+f_{u}\dot{u}w\delta_{ij}-h_{ij}w\dot{\varphi}.
\end{equation}
Integrating on $(\Sigma, g)$ and using $\dot H =0$, we  have 
\begin{equation*} 
\begin{split}
\int_\Sigma \sum_{ij}\dot{h}_{ij}^2\varphi w 
=&-\sum_{i,j}\int_\Sigma \dot{u}_{i,j}w\dot{h}_{ij}+\int_\Sigma \dot{u}w\dot{H}-\sum_{i,j}\int_\Sigma h_{ij}\dot{h}_{ij}w\dot{\varphi} \\
=&\sum_{i,j}\int_\Sigma  \dot{u}_{i}w_j\dot{h}_{ij}+\sum_{i,j}\int_\Sigma \dot{u}_{i}w\dot{h}_{ji,j}-\sum_{i,j}\int_\Sigma  h_{ij}\dot{h}_{ij}w\dot{\varphi} \\
=&-\sum_{i,j}\int_\Sigma  \dot{u}w_{i,j}\dot{h}_{ij}-\sum_{i,j}\int_\Sigma \dot{u}w_j\dot{h}_{ij,i}+\sum_{i,j}\int_\Sigma \dot{u}_{i}w\dot{h}_{ji,j}-\sum_{i,j}\int_\Sigma h_{ij}\dot{h}_{ij}w\dot{\varphi} \\
=&-\sum_{i,j}\int_\Sigma  \dot{u}w_{uu}u_iu_j\dot{h}_{ij}-\sum_{i,j}\int_\Sigma  \dot{u}w_j\dot{h}_{ij,i}+\sum_{i,j}\int_\Sigma  \dot{u}_{i}w\dot{h}_{ji,j}-\sum_{i,j}\int_\Sigma 
h_{ij}\dot{h}_{ij}w\dot{\varphi} \\
&+\sum_{i,j}\int_\Sigma w_u\dot{u}(-u_{i,j}+f\delta_{ij})\dot{h}_{ij}-\int_\Sigma w_u\dot{u}f\dot{H} \\
=&-\sum_{i,j}\int_\Sigma \dot{u}w_{uu}u_iu_j\dot{h}_{ij}+\sum_{i,j}\int_\Sigma (\dot{u}_{i}w-\dot{u}w_i)\dot{h}_{ji,j}-\sum_{i,j}\int_\Sigma h_{ij}\dot{h}_{ij}(w\dot{\varphi}-w_u\dot{u}\varphi) .
\end{split}
\end{equation*}
Applying   \eqref{0.16} and  \eqref{1.8}, we then  have
\begin{equation}\label{1.11}
\begin{split}
& \int_\Sigma \sum_{ij}\dot{h}_{ij}^2\varphi w  \\
=&-\sum_{i,j}\int_\Sigma w_{uu}\dot{u}u_iu_j\dot{h}_{ij} 
-(n-1)\sum_{i}\int_\Sigma(\dot{u}_{i}w-\dot{u}w_u u_i)(\dot{u}_i\varphi \Phi+u_i \dot{\varphi}\Phi+u_i\varphi\Phi_{u}\dot{u}) \\
&+(n-1)\int_\Sigma (w\dot{\varphi}-w_u\dot{u}\varphi)\left(\frac{n}{2}f_{uu} \dot{u}-(n-2)(f\Phi+\rho\Phi_u)\dot{u}-2\varphi \dot{\varphi}\Phi-\varphi^2\Phi_{u}\dot{u}\right) \\
=&-\sum_{i,j}\int_\Sigma w_{uu}\dot{u}u_iu_j\dot{h}_{ij}-(n-1)\int_\Sigma (|\nabla\dot{u}|^2+2\dot{\varphi}^2)w\varphi \Phi \\
&+(n-1)\int_\Sigma \dot{u}\dot{\varphi}\Big(|\nabla u|^2w_u\Phi+\frac{n}{2}wf_{uu}-(n-2)(f\Phi+\rho\Phi_u)w 
+2\varphi^2w_u\Phi-w\varphi^2\Phi_u\Big) \\
&-(n-1)\int_\Sigma \dot{u}^2\Big(\frac{n}{2}w_u\varphi f_{uu}-(n-2)(f\Phi+\rho\Phi_u)w_u\varphi 
-|\nabla u|^2w_u\varphi\Phi_u-w_u\varphi^3\Phi_u\Big) \\
&-(n-1)\int_\Sigma \nabla u\cdot\nabla \dot{u} (-\dot{u}w_u\varphi\Phi+\dot{\varphi}\Phi w+\dot{u}w\varphi\Phi_u) .
\end{split}
\end{equation}
To proceed, we note that 
\begin{eqnarray}\label{1.13}
&&\int_\Sigma |\nabla \dot{u}|^2w\varphi\Phi  =\int_\Sigma   w\varphi\Phi \nabla\dot{u}\cdot \nabla \dot{u}\\
&=&-\int_\Sigma  \dot{u}(\Delta \dot{u} )w\varphi\Phi-\int_\Sigma \dot{u}w\Phi\nabla\dot{u}\cdot \nabla \varphi-\int_\Sigma \dot{u}\varphi(w_u\Phi+w\Phi_u)\nabla\dot{u}\cdot\nabla u\nonumber,
\end{eqnarray}
and 
\begin{eqnarray}\label{1.14}
&&\int_\Sigma \dot{\varphi}^2\varphi w\Phi=\int_\Sigma (f\dot{u}-\nabla u\cdot\nabla \dot{u})\dot{\varphi} w\Phi\\
&=&\int_\Sigma \dot{u}\dot{\varphi} fw\Phi+\int_\Sigma \dot{u} \dot{\varphi}w\Phi\Delta u+\int_\Sigma \dot{u}w\Phi \nabla u\cdot \nabla \dot{\varphi}+\int_\Sigma \dot{u}\dot{\varphi}(w_u\Phi+w\Phi_u)|\nabla u|^2\nonumber.
\end{eqnarray}
Moreover, the linearization of \eqref{var} gives 
\begin{equation}
\dot{\varphi}_i=\sum_k\dot{h}_{ik}u_k+\sum_kh_{ik}\dot{u}_k.
\end{equation}
 Thus, we have 
\begin{eqnarray}\label{1.15}
\nabla u\cdot \nabla\dot{\varphi}-\nabla \dot{u}\cdot \nabla\varphi=\sum_{i,j} \dot{h}_{ij} u_iu_j.
\end{eqnarray}
It follows from \eqref{1.13} -- \eqref{1.15} that
\begin{eqnarray}\label{1.16}
&&\int_\Sigma (|\nabla\dot{u}|^2+\dot{\varphi}^2)w\varphi\Phi\\
&=&\int_\Sigma  w\Phi \dot{u}\dot{h}_{ij}u_iu_j+\int_\Sigma \dot{u} w\Phi(\dot{\varphi}\Delta u-\varphi\Delta\dot{u})-\int_\Sigma \dot{u}\varphi(w_u\Phi+w\Phi_u)\nabla u\cdot\nabla \dot{u}\nonumber\\
&&+\int_\Sigma \dot{u}\dot{\varphi}(fw\Phi+(w_u\Phi+w\Phi_u)|\nabla u|^2)\nonumber.
\end{eqnarray}
Using  \eqref{h-phi}  and \eqref{1.4}, we have
$$\dot{\varphi}\Delta u-\varphi\Delta\dot{u}=\dot{\varphi}(nf-H\varphi)-\varphi(nf_u\dot{u}-H\dot{\varphi})=n(f\dot{\varphi}-\varphi f_u\dot{u}),$$ where $\dot{H}=0$ has been used. Thus \eqref{1.16} becomes
\begin{eqnarray}\label{1.17}
&&\int_\Sigma (|\nabla\dot{u}|^2+\dot{\varphi}^2)w\varphi\Phi\\
&=&\int_\Sigma  w\Phi \dot{u}\dot{h}_{ij}u_iu_j-n\int_\Sigma \dot{u}^2 wf_u\Phi\varphi-\int_\Sigma \dot{u}\varphi(w_u\Phi+w\Phi_u)\nabla u\cdot\nabla \dot{u}\nonumber\\
&&+\int_\Sigma \dot{u}\dot{\varphi}((n+1)fw\Phi+(w_u\Phi+w\Phi_u)|\nabla u|^2)\nonumber.
\end{eqnarray}
Combing \eqref{1.11} with \eqref{1.17}, we obtain
\begin{eqnarray}\label{1.11-1}
&&\int_\Sigma  \sum_{ij}\dot{h}_{ij}^2\varphi w\\
&=&-\sum_{i,j}\int_\Sigma (w_{uu}+(n-1)\Phi w)\dot{u}u_iu_j\dot{h}_{ij}\nonumber\\
&&-(n-1)\int_\Sigma \nabla u\cdot\nabla \dot{u} (-2\dot{u}w_u\varphi\Phi+\dot{\varphi}\Phi w)-(n-1)\int_\Sigma \dot{\varphi}^2w\varphi \Phi\nonumber\\
&&-(n-1)\int_\Sigma \dot{u}^2\Big(\frac{n}{2}w_u\varphi f_{uu}-(n-2)(f\Phi+\rho\Phi_u)w_u\varphi\nonumber\\
&&\ \ \ \ \ \ \ \ \ \ \ \ \ \ \ \ \  -|\nabla u|^2w_u\varphi\Phi_u-w_u\varphi^3\Phi_u-nwf_u\varphi\Phi\Big)\nonumber\\
&&+(n-1)\int_\Sigma \dot{u}\dot{\varphi}\Big(-|\nabla u|^2w\Phi_u+\frac{n}{2}wf_{uu}-(n-2)(f\Phi+\rho\Phi_u)w\nonumber\\
&&\ \ \ \ \ \ \ \ \ \ \ \ \ \ \ \ \  +2\varphi^2w_u\Phi-w\varphi^2\Phi_u-(n+1)fw\Phi\Big)\nonumber.
\end{eqnarray}
Applying   \eqref{1.3} and \eqref{0.1} to replace the terms 
$\nabla u\cdot \nabla \dot{u}$ and $|\nabla u|^2$ in \eqref{1.11-1},   we have
\begin{eqnarray}\label{1.18}
&&\int_\Sigma  \sum_{ij}\dot{h}_{ij}^2\varphi w\\
&=&-\sum_{i,j}\int_\Sigma (w_{uu}+(n-1)\Phi w)\dot{u}u_iu_j\dot{h}_{ij}\nonumber\\
&&+\frac{n(n-1)}{2}\int_\Sigma \dot{u}\dot{\varphi}(-2\rho w\Phi_u+wf_{uu}-4fw\Phi)\nonumber\\
&&-\frac{n(n-1)}{2}\int_\Sigma \dot{u}^2(w_u\varphi f_{uu}-2\rho w_u\varphi\Phi_u-2wf_u\varphi\Phi-2fw_u\varphi\Phi)\nonumber\\
&=&-\sum_{i,j}\int_\Sigma (w_{uu}+(n-1)\Phi w)\dot{u}u_iu_j\dot{h}_{ij}-n(n-1)\int_\Sigma \dot{u}^2\varphi\Phi(w_u f-wf_u)\nonumber,
\end{eqnarray}
where \eqref{2.10} has been used in the second equality.
Thus, we obtain
\begin{equation}\label{1.19}
\begin{split}
0 = & \int_\Sigma  \sum_{ij}\dot{h}_{ij}^2\varphi w+\sum_{i,j}\int_\Sigma (w_{uu}+(n-1)\Phi w)\dot{u}u_iu_j\dot{h}_{ij}  \\
& +n(n-1)\int_\Sigma \dot{u}^2\varphi\Phi(w_u f-wf_u) .
\end{split}
\end{equation}

If $(N, d s^2)$ is static with $f$ satisfying \eqref{eq-static-intro}, 
we can choose $w = f$. 
In view of  \eqref{static-intro} and \eqref{1.19}, 
we then conclude  $\dot{h}_{ij}=0$ provided the  support function $\varphi$ is positive. 

Next suppose condition \eqref{eq-super-static} holds.
Let $ (r_0 , r_1 ) \subset  I $ be an interval  so that 
 the coordinate  $r$ varies between $r_0$ and $r_1$ on the surface $M$. 
We first consider the case $ \Phi \ge 0 $.
In this case, let $ w $ be a solution to the  linear second order ordinary differential equation 
\begin{eqnarray}\label{equ}
w_{uu}+(n-1)\Phi w=0
\end{eqnarray} 
with the initial conditions $w(r_0)>0 $, $ w_u(r_0)>0$
and 
\be \label{eq-initial-w}
w_u(r_0)f(r_0)-w(r_0)f_u(r_0) \ge 0 .
\ee
(Note that such a $ w$ always exists.) 
Then we have
\begin{eqnarray}\label{1.22}
\left(\frac{w}{f}\right)_{uu}+2\frac{f_u}{f}\left(\frac{w}{f}\right)_u+\frac{f_{uu}+(n-1)\Phi f}{f}\frac{w}{f}=0, 
\end{eqnarray}
where 
$$ 
\frac{f_{uu}+(n-1)\Phi f}{f} \le 0 
$$ 
by \eqref{eq-super-static}.
We claim that $ w/f $ is always positive when $ r $ ranges in $ [r_0 , r_1]$. 
To see this, 
suppose  $r_* \in (r_0, r_1]$ is the first zero of $w/f$ from $r_0$. As $w_u(r_0)>0$, 
$ \max_{r \in [r_0, r_*] } w / f $ must 
occur in $(r_0,r_*)$, which contradicts the strong maximum principle.  
Therefore, $ w > 0 $ for $ r \in (r_0, r_1)$.
Consequently, 
\begin{eqnarray}\label{increase}
(w_u f-wf_u)_u=w_{uu}f-wf_{uu}=-w(f_{uu}+(n-1)\Phi f)\geq 0,
\end{eqnarray}
by \eqref{eq-super-static}.
Hence,  by \eqref{eq-initial-w},  
$ w_u f-wf_u \ge  0 $.
This allows us to conclude $\dot{h}_{ij}=0$ from  \eqref{1.19} under the assumption 
$ \varphi > 0$.
If $ \Phi > 0 $ on $N$, by choosing the inequality in \eqref{eq-initial-w} to be 
strict, we have $ w_u f-wf_u > 0 $. In this case, it also follows from  \eqref{1.19}
that $ \dot {u} = 0 $.

If $ \Phi \le 0 $, similar arguments apply   if we choose $ w$ 
so that 
$w(r_1)>0$, $w_u(r_1)<0$ and 
\be \label{eq-initial-w-2}
w_u(r_1)f(r_1)-w(r_1)f_u(r_1) \le 0. 
\ee
When $ \Phi < 0 $, we choose the inequality in \eqref{eq-initial-w-2}  to be strict.

The above discussion  has proved  the following theorem. 

\begin{theo} \label{theo-inf-H}
Suppose $(N, ds^2) = \left( I \times \mathbb{S}^n, \frac{1}{f^2(r)}dr^2+r^2d\sigma \right)$
satisfies \eqref{eq-super-static},
with  either $ \Phi \ge 0 $ or $ \Phi \le 0 $.
Let $g$ be a Riemannian metric on  
$ \Sigma =  \mathbb{S}^n$.
If $(\Sigma, g)$ can be isometrically  immersed into $(N, d s^2)$ as a hypersurface $M$ 
with positive support function,
then the infinitesimal rigidity of $M$ with fixed mean curvature holds.
More precisely, this means that if $(\Sigma, g)$ admits a 
family of isometric immersion $\{ \iota_t \}$ such that $ M_t = \iota_t (\Sigma)$ 
has the same mean curvature and $M_0 = M$, 
then the  linearization of the second fundamental form of $M_t$ at $M=M_0$ is trivial.       
Moreover, if $ \Phi  > 0 $ or $ \Phi < 0 $, 
 the  linearization of the function $u$  at $M=M_0$ is also trivial.
\end{theo}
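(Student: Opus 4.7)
The plan is to establish the identity \eqref{1.19} and then close the argument by choosing $w$ to kill the mixed term and control the last term. The starting point is to multiply the linearized version of \eqref{h-phi}, namely $\dot h_{ij}\varphi = -\dot u_{i,j} + f_u \dot u\, \delta_{ij} - h_{ij}\dot\varphi$, by $\dot h_{ij} w$ and integrate over $\Sigma$. After integrating by parts twice and using $\dot H = 0$, the linearized Codazzi identity \eqref{1.8}, and the linearized Gauss identity \eqref{0.16} to rewrite $\sum_i\dot h_{ij,i}$ and $\sum_{ij} h_{ij}\dot h_{ij}$, one obtains a quadratic form in $\dot u$, $\nabla \dot u$, and $\dot \varphi$ whose coefficients depend on $w, w_u, w_{uu}$ and ambient data.

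The next step is to simplify that form. Using $\varphi\dot\varphi = f\dot u - \nabla u\cdot\nabla\dot u$ from \eqref{1.3} and the identity $\nabla u\cdot\nabla\dot\varphi - \nabla\dot u\cdot\nabla\varphi = \sum \dot h_{ij}u_iu_j$ from \eqref{1.15}, together with $\dot\varphi\Delta u - \varphi\Delta\dot u = n(f\dot\varphi - \varphi f_u\dot u)$ which follows from \eqref{h-phi} and \eqref{1.4}, the terms involving $|\nabla\dot u|^2$, $\dot\varphi^2$, and $\nabla u\cdot\nabla\dot u$ collapse. A final invocation of the algebraic identity \eqref{2.10} $f_{uu} - 4\Phi f - 2\rho\Phi_u = 0$ produces the clean identity \eqref{1.19}:
\begin{equation*}
\int_\Sigma \sum_{ij}\dot h_{ij}^2\,\varphi w
+ \int_\Sigma (w_{uu} + (n-1)\Phi w)\,\dot u \sum_{ij} u_iu_j\dot h_{ij}
+ n(n-1)\int_\Sigma \dot u^2\varphi\Phi(w_u f - w f_u) = 0.
\end{equation*}

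To exploit this, I will impose on $w$ the ODE $w_{uu} + (n-1)\Phi w = 0$, which eliminates the mixed middle term. In the case $\Phi \ge 0$, I pick initial data $w(r_0) > 0$, $w_u(r_0) > 0$, and $w_u(r_0)f(r_0) - w(r_0)f_u(r_0) \ge 0$ at the left endpoint of the $r$-interval $[r_0,r_1]$ traversed by $M$. The quantity $w_u f - w f_u$ then has non-negative derivative, since
\begin{equation*}
(w_u f - w f_u)_u = w_{uu} f - w f_{uu} = -w\bigl(f_{uu} + (n-1)\Phi f\bigr) \ge 0
\end{equation*}
by the super-static assumption \eqref{eq-super-static}. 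Positivity of $w$ on $[r_0,r_1]$ is obtained from the strong maximum principle applied to the equation \eqref{1.22} for $w/f$, whose zeroth-order coefficient is non-positive by \eqref{eq-super-static}, so $w/f$ cannot have a non-negative interior maximum that would allow it to vanish. With $\varphi w > 0$ and $\varphi\Phi(w_u f - w f_u) \ge 0$, the integral identity forces $\dot h_{ij} \equiv 0$; if additionally $\Phi > 0$, strict inequality in the initial data makes $\varphi\Phi(w_u f - w f_u) > 0$, yielding $\dot u \equiv 0$ as well. The case $\Phi \le 0$ is symmetric, with initial data prescribed at $r_1$ instead.

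The main obstacle is not conceptual but computational: the correct choice and order of integrations by parts in \eqref{1.11} and \eqref{1.17}, together with the somewhat miraculous cancellation enabled by \eqref{2.10}, is what reduces a long expression to the three-term identity \eqref{1.19}. Once the identity is in hand, the ODE/maximum principle step for $w$ is routine, and the rigidity conclusion is immediate.
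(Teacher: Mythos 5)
Your proposal is correct and follows essentially the same route as the paper: the same weighted integration by parts of the linearized equation \eqref{1.4} against $\dot h_{ij}w$, the same use of \eqref{0.16}, \eqref{1.8}, \eqref{1.3}, \eqref{1.15} and \eqref{2.10} to reach the identity \eqref{1.19}, and the same choice of $w$ solving $w_{uu}+(n-1)\Phi w=0$ with the maximum-principle argument for $w/f$ and the monotonicity of $w_uf-wf_u$. Nothing essential is missing.
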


\section{Global rigidity for surfaces with fixed  mean curvature} \label{sec-global-rigi}
In this section, we will modify the proof in Section \ref{sec-inf-rigi} to prove the global rigidity, 
i.e. Theorem \ref{theo-intro}.
Suppose $ \iota$, $ \tilde \iota$ are two isometric immersions of $(\Sigma, g)$ in $(N, d s^2)$.
Let $M = \iota (\Sigma) $ and $\tilde M = \tilde \iota (\Sigma)$.
We denote the restriction of $X $, $ \rho$, $u$ (which is defined in the ambient space $N$)
to $\tilde M$ by $\tilde X$, $ \tilde \rho $, $\tilde u$,  respectively. 
By abuse of notations, we also use $X $, $ \rho$, $u$ to denote their restriction to $M$. 
Let $ \varphi$, $ h $ and $ \tilde \varphi$, $ \tilde h$ be the support function, the second fundamental
form of $M$ and  $\tilde M$, respectively. 
Viewing $h$ and $\tilde h$ as tensors on $\Sigma$, we define 
$$ v = \tilde h - h . $$

Now suppose  $M$ and $ \tilde M$ have the same mean curvature. 
Let  $\{e_1,e_2,\cdots,e_n\}$ be a local orthonormal frame on $(\Sigma, g)$, we have 
\begin{eqnarray}\label{3.1-g}
\sum_iv_{ii}=0.
\end{eqnarray}
By the Gauss equations of $M$ and $\tilde{M}$, the corresponding equations of \eqref{2.5} are
\begin{eqnarray}\label{3.2}
\sigma_2(\tilde{h})&=&\frac{\tilde{R}}{2}+(n-1)\left(\frac{n}{2}f_u(\tilde{u})-(n-2)\tilde{\rho}\Phi(\tilde{u})-\tilde{\varphi}^2\Phi(\tilde{u})\right), \\
\sigma_2(h)&=&\frac{R}{2}+(n-1)\left(\frac{n}{2}f_u(u)-(n-2)\rho\Phi(u)-\varphi^2\Phi(u)\right)\nonumber.
\end{eqnarray}
Since $\tilde{R}=R$, the difference of the above two equations gives 
\begin{eqnarray}
&&\sum_{i<j}\left(h_{ii}v_{jj}+h_{jj}v_{ii}-2h_{ij}v_{ij}+v_{ii}v_{jj}-v_{ij}^2\right)\\
&=&\frac{ n (n-1) }{2}(f_u(\tilde{u})-f_u(u))-(n-2)(n-1)(\tilde{\rho}\Phi(\tilde{u})-\rho\Phi(u))\nonumber\\
&&-(n-1)\left(\tilde{\varphi}^2\Phi(\tilde{u})-\varphi^2\Phi(u)\right) . \nonumber
\end{eqnarray}
 Using \eqref{3.1-g}, we can rewrite the above  as 
 \begin{equation} \label{3.3}
  \begin{split}
  -\sum_{i,j}h_{ij}v_{ij} =&\frac{(n-1)n}{2}(f_u(\tilde{u})-f_u(u))-(n-2)(n-1)(\tilde{\rho}\Phi(\tilde{u})-\rho\Phi(u))\\
  &-(n-1)(\tilde{\varphi}^2\Phi\left(\tilde{u})-\varphi^2\Phi(u)\right)+\frac{1}{2}\sum_{i,j}v_{ij}^2, \\
  -\sum_{i,j}\tilde{h}_{ij}v_{ij} =&\frac{(n-1)n}{2}(f_u(\tilde{u})-f_u(u))-(n-2)(n-1)(\tilde{\rho}\Phi(\tilde{u})-\rho\Phi(u)) \\
  &-(n-1)(\tilde{\varphi}^2\Phi\left(\tilde{u})-\varphi^2\Phi(u)\right)-\frac{1}{2}\sum_{i,j}v_{ij}^2. 
 \end{split}
  \end{equation}
By the Codazzi equations of $M$ and $ \tilde{M}$,  the corresponding equations of  \eqref{2.6} are
\begin{eqnarray}
&&\sum_ih_{ij, i}-H_{j}=-(n-1)\varphi\Phi(u)u_j,\nonumber\\
&&\sum_i\tilde{h}_{ij, i}-\tilde{H}_{j}=-(n-1)\tilde{\varphi}\Phi(\tilde{u})\tilde{u}_j,\nonumber
\end{eqnarray}
which, combined with $ H = \tilde H$, imply 
\begin{equation}\label{3.5}
\sum_iv_{ij,i} = -(n-1)\left(\tilde{\varphi}\Phi(\tilde{u})\tilde{u}_j-\varphi\Phi(u)u_j\right).
\end{equation}
We still let $w$  be some weighted single-variable  function on $I$ which is to be chosen  later. 
We calculate the following integral
\begin{eqnarray}\label{3.8}
&&-\int_{\Sigma}\sum_{i,j}(w(\tilde{u})u_i-w(u)\tilde{u}_i)_jv_{ji}\\
&=&\int_{\Sigma}\sum_{i,j}(w(\tilde{u})u_i-w(u)\tilde{u}_i)v_{ji,j}\nonumber\\
&=&(n-1)\int_{\Sigma}w(u)\tilde{\varphi}\Phi(\tilde{u})|\nabla\tilde{u}|^2+(n-1)\int_{\Sigma}w(\tilde{u})\varphi\Phi(u)|\nabla u|^2\nonumber\\
&&-(n-1)\int_{\Sigma}(w(u)\varphi\Phi(u)+w(\tilde{u})\tilde{\varphi}\Phi(\tilde{u}))\nabla u\cdot\nabla\tilde{u}.\nonumber
\end{eqnarray}
Here, in the second equality,   \eqref{3.5} has been used.

On the other hand, we have
$$(w(\tilde{u})u_i-w(u)\tilde{u}_i)_j=w_u(\tilde{u})\tilde{u}_ju_i-w_u(u)\tilde{u}_iu_j+w(\tilde{u})u_{ij}-w(u)\tilde{u}_{ij}, $$
where, by \eqref{0.1}, 
\begin{eqnarray}
u_{ij}=f(u)\delta_{ij}-h_{ij}\varphi,\ \ \tilde{u}_{ij}=f(\tilde{u})\delta_{ij}-\tilde{h}_{ij}\tilde{\varphi}.\nonumber
\end{eqnarray} 
Therefore, 
\begin{eqnarray}\label{3.10}
&&-\sum_{i,j}(w(\tilde{u})u_i-w(u)\tilde{u}_i)_jv_{ji}\\
&=&-\sum_{i,j}(w_u(\tilde{u})\tilde{u}_ju_i-w_u(u)\tilde{u}_iu_j)v_{ij}\nonumber\\
&&+w(u)\sum_{i,j}(f(\tilde{u})\delta_{ij}-\tilde{h}_{ij}\tilde{\varphi})v_{ij}-w(\tilde{u})\sum_{i,j}(f(u)\delta_{ij}-h_{ij}\varphi )v_{ij}\nonumber\\
&=&(w_u(u)-w_u(\tilde{u}))\sum_{i,j}\tilde{u}_iu_jv_{ij}+w(\tilde{u})\varphi\sum_{i,j}h_{ij}v_{ij}-w(u)\tilde{\varphi}\sum_{i,j}\tilde{h}_{ij}v_{ij}\nonumber\\
&=&-\int^{\tilde{u}}_{u}w_{ss}(s)ds\sum_{i,j}u_i\tilde{u}_jv_{ij}\nonumber\\
&&+w(u)\tilde{\varphi}\left(\frac{(n-1)n}{2}(f_u(\tilde{u})-f_u(u))-(n-2)(n-1)(\tilde{\rho}\Phi(\tilde{u})-\rho\Phi(u))\right.\nonumber\\
&&\ \ \ \ \ \ \ \ \ \ \ \ \  \left.+(n-1)(\varphi^2\Phi(u)-\tilde{\varphi}^2\Phi(\tilde{u}))-\frac{1}{2}\sum_{i,j}v_{ij}^2\right)\nonumber\\
&&-w(\tilde{u})\varphi\left(\frac{(n-1)n}{2}(f_u(\tilde{u})-f_u(u))-(n-2)(n-1)(\tilde{\rho}\Phi(\tilde{u})-\rho\Phi(u))\right.\nonumber\\
&&\ \ \ \ \ \ \ \ \ \ \ \ \  \left.+(n-1)(\varphi^2\Phi(u)-\tilde{\varphi}^2\Phi(\tilde{u}))+\frac{1}{2}\sum_{i,j}v_{ij}^2\right)\nonumber,
\end{eqnarray}
where we have used  \eqref{3.1-g},  \eqref{3.3} in the second, the third equality, respectively. 
By  \eqref{3.8} and  \eqref{3.10}, we thus have
\begin{equation} \label{3.11}
\begin{split}
0 =  & \ (n-1)\int_{\Sigma}\Big[w(u)\tilde{\varphi}\Phi(\tilde{u})(|\nabla \tilde{u}|^2+\tilde{\varphi}^2)+w(\tilde{u})\varphi\Phi(u)(|\nabla u|^2+\varphi^2) \\ 
& \ -(w(u)\varphi\Phi(u)+w(\tilde{u})\tilde{\varphi}\Phi(\tilde{u}))(\nabla u\cdot\nabla \tilde{u}+\varphi\tilde{\varphi})\Big] \\
&+\int_{\Sigma}\sum_{i,j}u_i\tilde{u}_jv_{ij}  \int^{\tilde{u}}_uw_{ss}(s)ds  -\frac{(n-1)n}{2}\int_{\Sigma}w(u)\tilde{\varphi}\int^{\tilde{u}}_uf_{ss}(s)ds \\
&+\frac{(n-1)n}{2}\int_{\Sigma}w(\tilde{u})\varphi\int^{\tilde{u}}_uf_{ss}(s)ds+\frac{1}{2}\int_{\Sigma}(w(u)\tilde{\varphi}+w(\tilde{u})\varphi)\sum_{i,j}v_{ij}^2 \\ 
&+(n-2)(n-1)\int_{\Sigma}(w(u)\tilde{\varphi}-w(\tilde{u})\varphi)\int^{\tilde{u}}_u(\rho\Phi(s))_sds.
\end{split}
\end{equation}
To handle  the first integral  on the right side of \eqref{3.11}, we note that, by \eqref{0.1},  
\begin{equation}\label{3.12-g}
\begin{split}
& \ w(u)\tilde{\varphi}\Phi(\tilde{u})2\tilde{\rho}-w(u)\tilde{\varphi}\Phi(u)2\rho  
 +w(\tilde{u})\varphi\Phi(u)2\rho \\
 & \  -w(\tilde{u})\varphi\Phi(\tilde{u})2\tilde{\rho}
  +w(u)\tilde{\varphi}\Phi(u)2\rho+w(\tilde{u})\varphi\Phi(\tilde{u})2\tilde{\rho}\\
& \ -(w(u)\varphi\Phi(u)+w(\tilde{u})\tilde{\varphi}\Phi(\tilde{u}))(\nabla u\cdot \nabla\tilde{u}+\varphi\tilde{\varphi}) \\
= & \ w(u)\tilde{\varphi}\int^{\tilde{u}}_u(2\Phi(s)\rho(s))_sds-w(\tilde{u})\varphi\int^{\tilde{u}}_u(2\Phi(s)\rho(s))_sds \\
& \ +w(u)\tilde{\varphi}\Phi(u)(|\nabla u|^2+\varphi^2)+w(\tilde{u})\varphi\Phi(\tilde{u})(|\nabla\tilde{u}|^2+\tilde{\varphi}^2) \\
& \ -w(u)\varphi\Phi(u)(\nabla u\cdot\nabla\tilde{u}+\varphi\tilde{\varphi})-w(\tilde{u})\tilde{\varphi}\Phi(\tilde{u})(\nabla u\cdot\nabla\tilde{u}+\varphi\tilde{\varphi}) \\
=& \ w(u)\tilde{\varphi}\int^{\tilde{u}}_u(2\Phi(s)\rho(s))_sds-w(\tilde{u})\varphi\int^{\tilde{u}}_u(2\Phi(s)\rho(s))_sds \\
& \ +\tilde{\varphi}\nabla G(u)\cdot\nabla u+\varphi\nabla G(\tilde{u})\cdot\nabla\tilde{u}-\varphi\nabla G(u)\cdot\nabla \tilde{u}-\tilde{\varphi}\nabla G(\tilde{u})\cdot \nabla u, 
\end{split}
\end{equation}
where  we define $G(s)=\int w(s)\Phi(s)ds.$
Integrating  by parts, we have
\begin{equation}\label{3.13}
\begin{split}
&\int_{\Sigma}\Big[\tilde{\varphi}\nabla G(u)\cdot\nabla u+\varphi\nabla G(\tilde{u})\cdot\nabla\tilde{u}-\varphi\nabla G(u)\cdot\nabla\tilde{u}-\tilde{\varphi}\nabla G(\tilde{u})\cdot\nabla u\Big] \\
=&\int_{\Sigma}\Big[G(u)\nabla(\varphi\nabla\tilde{u})-G(u)\nabla(\tilde{\varphi}\nabla u)+G(\tilde{u})\nabla(\tilde{\varphi}\nabla u)-G(\tilde{u})\nabla(\varphi\nabla\tilde{u})\Big] \\
=&\int_{\Sigma}\Big(G(\tilde{u})-G(u)\Big)\Big(\nabla\tilde{\varphi}\cdot\nabla u-\nabla\varphi\cdot\nabla\tilde{u}+\tilde{\varphi}\Delta u-\varphi\Delta\tilde{u}\Big) \\
=&\int_{\Sigma}\left[\sum_{i,j}(\tilde{h}_{ij}\tilde{u}_iu_j-h_{ij}u_i\tilde{u}_j)+\tilde{\varphi}(nf(u)-H\varphi)-\varphi(nf(\tilde{u})-\tilde{H}\tilde{\varphi})\right] \left(\int^{\tilde{u}}_uw(s)\Phi(s)ds\right)\\
=&\int_{\Sigma} \sum_{i,j}v_{ij}u_i\tilde{u}_j \left(\int^{\tilde{u}}_uw(s)\Phi(s)ds\right)
+\int_{\Sigma}\Big(nf(u)\tilde{\varphi}-nf(\tilde{u})\varphi\Big)\int^{\tilde{u}}_uw(s)\Phi(s)ds , 
\end{split}
\end{equation}
where we have used \eqref{0.1},  \eqref{var} and  \eqref{3.1-g}.

Now, combing  \eqref{3.12-g}, \eqref{3.13} and \eqref{3.11}, we have
\begin{equation}\label{3.14}
\begin{split}
0 =& \ \frac{1}{2}\int_{\Sigma}(w(u)\tilde{\varphi}+w(\tilde{u})\varphi)\sum_{i,j}v_{ij}^2 \\
& \ + \int_{\Sigma} \sum_{i,j}u_i\tilde{u}_jv_{ij} \int^{\tilde{u}}_u(w_{ss}(s)+(n-1)w(s)\Phi(s))ds \\
& \ + \int_{\Sigma} (w(u)\tilde{\varphi}-w(\tilde{u})\varphi){  \int^{\tilde{u}}_u\left[(n-1)n(\Phi(s)\rho(s))_s-\frac{(n-1)n}{2}f_{ss}(s)\right]ds} \\
& \ +(n-1)n\int_{\Sigma} (f(u)\tilde{\varphi}-f(\tilde{u})\varphi) \int^{\tilde{u}}_uw(s)\Phi(s)ds .
\end{split}
\end{equation}
Since  $(\rho\Phi)_{u}=\rho\Phi_u+\rho_u\Phi=\rho\Phi_u+f\Phi,$
by \eqref{2.10} we have 
\begin{equation} \label{eq-rhophi}
2(\rho\Phi)_u-f_{uu}=2\rho\Phi_u+2f\Phi-f_{uu} =-2f\Phi. 
\end{equation}
Inserting \eqref{eq-rhophi} into \eqref{3.14}, we thus obtain 
\begin{equation}\label{3.16}
\begin{split}
0 = &\frac{1}{2}\int_{\Sigma}(w(u)\tilde{\varphi}+w(\tilde{u})\varphi)\sum_{i,j}v_{ij}^2 \\
&+\int_{\Sigma}  \sum_{i,j}u_i\tilde{u}_jv_{ij}  \int^{\tilde{u}}_u(w_{ss}(s)+(n-1)\Phi(s)w(s))ds \\
&+(n-1)n\int_{\Sigma}\tilde{\varphi}\left[-w(u)\int^{\tilde{u}}_uf(s)\Phi(s)ds+f(u)
\int^{\tilde{u}}_uw(s)\Phi(s)ds\right] \\
&+(n-1)n\int_{\Sigma}\varphi\left[w(\tilde{u})\int^{\tilde{u}}_uf(s)\Phi(s)ds-f(\tilde{u})
\int^{\tilde{u}}_uw(s)\Phi(s)ds\right] .
\end{split}
\end{equation}

Now suppose  $(N, d s^2)$ 
satisfies \eqref{eq-super-static}. 
Let  $  (r_0, r_1) \subset I$  be an interval so that the coordinate $r$ varies between 
$r_0$ and $r_1$ for all points on $M$ and $ \tilde M$. 
Similar to Section \ref{sec-inf-rigi},  
we  let  $ w$  be a solution to the ODE
$$w_{uu}+(n-1)\Phi w=0 . $$ 
With such a choice of $ w$, the second integral on the right side of \eqref{3.16} vanishes. 
To handle the last two integrals, we note that 
\begin{equation}\label{4.13}
\begin{split}
& f(u)\int^{\tilde{u}}_uw(s)\Phi(s)ds-w(u)\int^{\tilde{u}}_uf(s)\Phi(s) \, ds\\
=& \int^{\tilde{u}}_uf(u)f(s)\left[\frac{w(s)}{f(s)}-\frac{w(u)}{f(u)}\right]\Phi(s) \, ds ,
\end{split}
\end{equation}
and
\begin{equation}\label{4.13-t}
\begin{split}
& f( \tilde u)\int^{\tilde{u}}_uw(s)\Phi(s)ds-w( \tilde u)\int^{\tilde{u}}_uf(s)\Phi(s) \, ds\\
=& \int^{\tilde{u}}_u f(\tilde u)f(s)\left[\frac{w(s)}{f(s)}-\frac{w ( \tilde u)}{f( \tilde u)}\right]\Phi(s) \, ds . 
\end{split}
\end{equation}
Thus, we want $ w/f$ to have a suitable  monotonic  property depending on the sign of $ \Phi$. 
If  $ \Phi \ge 0 $, we specify   the initial conditions of $ w$ at $ r = r_0$ so that 
 $w(r_0)>0 $, $ w_u(r_0)>0$
and 
\be \label{eq-initial-w-g}
w_u(r_0)f(r_0)-w(r_0)f_u(r_0) \ge 0 .
\ee
Then, by the argument in Section \ref{sec-inf-rigi}, we have  $ w > 0 $ and 
$ w_u f - w f_u   \ge 0 $. The latter then implies 
$ w/f $ is monotonically non-decreasing. 
Hence, it follows from \eqref{3.16}, \eqref{4.13} and \eqref{4.13-t} that 
$ v_{ij} = 0 $, provided $ \varphi > 0 $ and $ \tilde \varphi > 0 $.
If $ \Phi > 0 $, by choosing the inequality in \eqref{eq-initial-w-g}
to be strict, we then have $ w_u f - w f_u  >  0 $. In this case, 
$ w/f $ is strictly  increasing, and 
we conclude from  \eqref{3.16}, \eqref{4.13} and \eqref{4.13-t} that 
$ u = \tilde u $ everywhere  on $ \Sigma$.
 As a result, $M$ and $ \tilde M$ differ by a rotation of $(N, d s^2)$.

The case $ \Phi \le 0 $ (and $ \Phi < 0 $) are proved in a similar way 
by choosing the initial  conditions of $ w$ at $ r = r_1$ so that 
$w(r_1)>0$, $w_u(r_1)<0$ and 
\be \label{eq-initial-w-2-g}
w_u(r_1)f(r_1)-w(r_1)f_u(r_1) \le 0. 
\ee
When $ \Phi < 0 $, we choose the inequality in \eqref{eq-initial-w-2-g}  to be strict.

Therefore, the discussion above has  proved the following theorem, which is 
a restatement of Theorem \ref{theo-intro}.

\begin{theo}
Suppose $(N, ds^2) = \left( I \times \mathbb{S}^n, \frac{1}{f^2(r)}dr^2+r^2d\sigma \right)$
satisfies \eqref{eq-super-static},
with  either $ \Phi \ge 0 $ or $ \Phi \le 0 $.
Let $g$ be a Riemannian metric on  $ \Sigma = \mathbb{S}^n$.
Suppose  $(\Sigma, g)$ can be isometrically immersed into $(N,ds^2)$ as two hypersurfaces
 $M$ and $ \tilde M$, both of which have positive support function.
If $M$ and $ \tilde M$ have the same mean curvature, 
then $M$ and $\tilde{M}$ have the same second fundamental form. 
If in addition $\Phi>0$ or $ \Phi < 0$, then $M$ and $\tilde{M}$
only differ by  a rotation of the ambient space $(N, d s^2)$.    
\end{theo}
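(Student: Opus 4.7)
My approach will parallel the infinitesimal rigidity argument of Section~\ref{sec-inf-rigi}, but work with the tensor $v = \tilde h - h$ on $\Sigma$ in place of the variation $\dot h$. The mean curvature hypothesis $H = \tilde H$ gives $\mathrm{tr}_g v = 0$, and the identity $\tilde R = R$ combined with the Gauss equations \eqref{2.5} for both immersions will produce the key quadratic identity expressing $-\sum h_{ij} v_{ij}$ and $-\sum \tilde h_{ij} v_{ij}$ in terms of $\frac{1}{2}\sum v_{ij}^2$, the ambient curvature mismatch at $u$ vs.\ $\tilde u$, and a term quadratic in $\varphi, \tilde\varphi$. The Codazzi equations \eqref{2.6} applied to $M$ and $\tilde M$ and subtracted give the divergence identity $\sum_i v_{ij,i} = -(n-1)(\tilde\varphi\Phi(\tilde u)\tilde u_j - \varphi\Phi(u) u_j)$.

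Next I will introduce a weight $w$ (a one-variable function on $I$, pulled back to $N$) and test the divergence identity against the vector field $w(\tilde u)\nabla u - w(u)\nabla\tilde u$. Integration by parts on $\Sigma$ converts this into two equal expressions: one in terms of $v_{ji,j}$ (handled by Codazzi), and one in which the Hessians of $u$ and $\tilde u$ appear, which via \eqref{h-phi} reintroduce the mixed second fundamental forms. Combining these, rewriting differences $F(\tilde u)-F(u)$ as $\int_u^{\tilde u} F'(s)\,ds$ where appropriate, and making crucial use of the algebraic identity \eqref{eq-rhophi} to simplify the combination $2(\rho\Phi)_u - f_{uu} = -2f\Phi$, should yield a master identity of the form
\begin{equation*}
\begin{split}
0 = &\ \tfrac12\int_\Sigma (w(u)\tilde\varphi + w(\tilde u)\varphi) \textstyle\sum_{i,j} v_{ij}^2 \\
&+ \int_\Sigma \textstyle\sum_{i,j} u_i \tilde u_j v_{ij} \int_u^{\tilde u}(w_{ss}+(n-1)\Phi w)\,ds \\
&+ n(n-1)\int_\Sigma \tilde\varphi \int_u^{\tilde u} f(s)f(u)\bigl[\tfrac{w(s)}{f(s)} - \tfrac{w(u)}{f(u)}\bigr]\Phi(s)\,ds \\
&+ n(n-1)\int_\Sigma \varphi \int_u^{\tilde u} f(s)f(\tilde u)\bigl[\tfrac{w(\tilde u)}{f(\tilde u)} - \tfrac{w(s)}{f(s)}\bigr]\Phi(s)\,ds.
\end{split}
\end{equation*}

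The strategy then is to choose $w$ so that the second line vanishes and the remaining two integrals have a definite sign. I will take $w$ to solve the linear ODE $w_{uu} + (n-1)\Phi w = 0$ on the interval $[r_0, r_1]$ covered by the two surfaces, with initial data prescribed at one endpoint depending on the sign of $\Phi$: if $\Phi \ge 0$, I impose $w(r_0)>0$, $w_u(r_0) > 0$, and $w_u(r_0)f(r_0) - w(r_0)f_u(r_0) \ge 0$; if $\Phi \le 0$, I impose the analogous conditions at $r_1$ with flipped sign on $w_u$. A maximum principle argument (exactly as for \eqref{1.22}) shows $w/f > 0$ throughout, and then \eqref{increase} together with the hypothesis \eqref{eq-super-static} forces $(w/f)$ to be monotone in the right direction along the interval. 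Combined with the sign of $\Phi$, each term $[w(s)/f(s) - w(u)/f(u)]\Phi(s)$ in the integrand from $u$ to $\tilde u$ is nonnegative, and similarly for the other bracketed expression.

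Since $\varphi, \tilde\varphi > 0$ by hypothesis, every term in the master identity has the same sign, forcing $v_{ij} \equiv 0$; this gives $h = \tilde h$. When $\Phi$ is strictly signed, strengthening the initial condition to strict inequality makes $w/f$ strictly monotone, so the vanishing of the last two integrals forces $u = \tilde u$ pointwise on $\Sigma$. Together with $h = \tilde h$, this identifies the two immersions up to a rotation of $(N, ds^2)$, completing the proof. The main technical obstacle I expect is the bookkeeping in deriving the master identity: many cancellations among the curvature terms rely on using \eqref{0.1}, \eqref{h-phi}, \eqref{var}, \eqref{2.10}, and the integration-by-parts identity \eqref{3.13} together in exactly the right order, and in particular on recognizing the combination that assembles into the total derivative $G(s) = \int w(s)\Phi(s)\,ds$.
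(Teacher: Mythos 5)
Your proposal is correct and follows essentially the same route as the paper: the same tensor $v=\tilde h - h$, the same test vector field $w(\tilde u)\nabla u - w(u)\nabla\tilde u$ integrated against the Codazzi divergence identity, the same master identity (your displayed formula is exactly \eqref{3.16} after substituting \eqref{4.13} and \eqref{4.13-t}), and the same choice of $w$ solving $w_{uu}+(n-1)\Phi w=0$ with sign-dependent initial data and the maximum-principle monotonicity of $w/f$. No gaps to report.
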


\begin{remark} 
In view of the above proof, 
to draw  the conclusion $ u = \tilde u $, 
it suffices to require the set  $ \{ \Phi > 0 \} $ 
is dense in $ I$.
On the other hand, if 
the set $\{ \Phi =0 \} $ contains some open interval $\tilde I $,
then it is easily checked  $(N,d s^2)$ contains a ring $ R = \tilde I  \times \mathbb{S}^n$ which is  part of a space form. In this case, if $M$ is contained in $R$, we can ``translate" $M$ in $ R$  by an isometry of that space form.  Obviously, such a ``translation" preserves  the second fundamental form of $M$, but it may not be the restriction of a global  isometry  of  $(N , ds^2)$.
\end{remark}

\section{Other related rigidities} \label{sec-other}

In this last section, we will describe several related rigidity results. 

First, we consider rigidity of isometric hypersurfaces with the same $\sigma_2$-curvature.
Recall that the $\sigma_2$-curvature of  a hypersurface $M$ is defined by 
\be \label{eq-sigma-2}
 \sigma_2 (h) = \sum_{ i < j} \kappa_i \kappa_j 
\ee
where $ \{ \kappa_i \}$ are the principal curvature of $M$.

\begin{theo} \label{theo-inf-sigma2}
Let $(N, ds^2) = \left( I \times \mathbb{S}^n, \frac{1}{f^2(r)}dr^2+r^2d\sigma \right)$
be a warped product space  with $ \Phi \neq 0 $.
Let $g$ be a Riemannian metric on  
$ \Sigma =  \mathbb{S}^n$.
 If $(\Sigma, g)$ can be isometrically  immersed into $(N, d s^2)$ as a hypersurface $M$ with 
 nowhere vanishing mean curvature and nowhere vanishing support function, 
 then the infinitesimal rigidity of $M$ with fixed $ \sigma_2$-curvature holds. 
 Precisely, this means that 
  if $(\Sigma, g)$ admits a 
family of isometric immersion $\{ \iota_t \}$ into $(N, d s^2)$  such that $ M_t = \iota_t (\Sigma)$ 
has the same $\sigma_2$-curvature and $M_0 = M$, 
then the  linearization of the second fundamental form of $M_t$ at $M$ is trivial.       
\end{theo}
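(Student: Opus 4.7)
The plan is to adapt the integral-identity technique of Section~\ref{sec-inf-rigi}, replacing the scalar weight at the key contraction step by the Newton tensor $T_1^{ij} := H g^{ij} - h^{ij}$. Two facts drive the adaptation. First, because $\sigma_2(h) = \tfrac{1}{2}(H^2 - |h|^2)$, the hypothesis $\dot\sigma_2(h) = 0$ is equivalent to $\sum_{ij} T_1^{ij}\dot h_{ij} = 0$. Second, a direct computation from the Codazzi identity \eqref{2.6} yields the divergence formula
\[
\sum_j (T_1^{ij})_{,j} \; = \; (n-1)\,\varphi\,\Phi\, u^i ,
\]
which is the analogue, for $T_1$, of the Codazzi content of \eqref{2.6}.

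First, I would linearize \eqref{2.5} and \eqref{2.6} in the $1$-parameter family $\{\iota_t\}$, as at the beginning of Section~\ref{sec-inf-rigi}. The constraint $\dot\sigma_2(h) = 0$ gives a scalar identity coupling $\dot u$ and $\dot \varphi$ through the radial geometry (the analogue of \eqref{0.16}), and the linearized Codazzi identity reads
\[
\sum_i \dot h_{ij,\, i} \; = \; \dot H_j - (n-1)\bigl(\dot u_j\,\varphi\,\Phi \; + \; u_j\,\dot\varphi\,\Phi \; + \; u_j\,\varphi\,\Phi_u\,\dot u\bigr),
\]
where, in contrast to Section~\ref{sec-inf-rigi}, $\dot H$ is no longer known to vanish.

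Next, following the template \eqref{3.12}--\eqref{1.11}, I would contract the linearized support-function equation \eqref{1.4} with $T_1^{ij} w$, for an auxiliary function $w = w(u)$, and integrate over $\Sigma$. The term carrying $\sum T_1^{ij}\dot h_{ij}$ vanishes by $\dot\sigma_2 = 0$, and every integration by parts now picks up the divergence of $T_1$, i.e.\ an extra $(n-1)\varphi\Phi u^i$ contribution in place of the zero divergence of $g$. The unknown $\dot H$ contributions are absorbed by contracting the linearized Codazzi identity with $T_1^{ij} u_j w$ and subtracting. After the algebra that parallels \eqref{1.11}--\eqref{1.18}, I would choose $w$ to solve the same linear ODE $w_{uu} + (n-1)\Phi w = 0$ as in \eqref{equ}, which causes the mixed $\dot u \cdot \dot h$ terms to cancel, leaving an integral identity of the schematic form
\[
0 \; = \; \int_\Sigma w\,\varphi\, Q_{T_1}(\dot h) \; + \; c(n)\int_\Sigma H\,\dot u^2\,\varphi\,\Phi\,(w_u f - w f_u),
\]
where $Q_{T_1}(\dot h)$ is a quadratic expression in $\dot h$ that is sign-definite when $H$ has a definite sign, and $c(n)$ is a dimensional constant.

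The main obstacle is the precise bookkeeping needed to produce a quadratic form in $\dot h$ with a definite sign without imposing convexity on $M$. In Section~\ref{sec-inf-rigi}, the vanishing of $\dot H$ automated this step; in the $\sigma_2$ case, one must couple the scalar Gauss constraint from $\dot\sigma_2 = 0$ to the linearized Codazzi identity via the divergence formula for $T_1$. The assumption $\Phi \neq 0$ allows one to solve the ODE for $w$ so that, by the maximum-principle argument around \eqref{equ}--\eqref{increase}, the factor $w_u f - w f_u$ has a definite sign on the interval swept by $r$ on $M$; the hypothesis $H \neq 0$ then makes $Q_{T_1}$ sign-definite, and $\varphi \neq 0$ ensures the outer weight $\varphi$ does not degenerate. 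Combining these, the integral identity forces $\dot h_{ij} \equiv 0$, which is the claimed infinitesimal rigidity.
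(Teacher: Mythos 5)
Your divergence formula for the Newton tensor is correct (it is just a rewriting of \eqref{2.6}), but the proposal has genuine gaps that would prevent the argument from closing. First, the sign-definiteness of the quadratic form $Q_{T_1}(\dot h)$ is asserted rather than proved, and it is false under the stated hypotheses: a contraction such as $\sum_{i,j,k}T_1^{ik}\dot h_{kj}\dot h_{ji}$ is sign-definite only when $T_1^{ij}=Hg^{ij}-h^{ij}$ is a definite tensor, which is a convexity-type condition on $M$ (for $n=2$ it is strict convexity), not a consequence of $H\neq 0$. Worse, if you literally contract the linearized support-function equation \eqref{1.4} against $T_1^{ij}w$, the $\dot h$-term on the left becomes $w\varphi\sum_{i,j} T_1^{ij}\dot h_{ij}=w\varphi\,\dot\sigma_2(h)=0$, so no quadratic expression in $\dot h$ survives to be controlled; the bookkeeping you describe cannot produce the identity you write down. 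Second, your choice of $w$ solving $w_{uu}+(n-1)\Phi w=0$ together with the claim that $w_u f-wf_u$ has a definite sign invokes the maximum-principle argument of \eqref{equ}--\eqref{increase}, but that argument uses \eqref{eq-super-static} and a fixed sign of $\Phi$ in an essential way (recall $(w_u f-wf_u)_u=-w\left(f_{uu}+(n-1)\Phi f\right)$), and \eqref{eq-super-static} is \emph{not} a hypothesis of Theorem~\ref{theo-inf-sigma2}. As stated, the weight $w$ cannot be given the required monotonicity.

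The structural point you are missing is that for the $\sigma_2$-problem no Codazzi information, no Newton tensor, and no auxiliary ODE are needed. The Gauss equation \eqref{2.5} already expresses $\sigma_2(h)-\tfrac{R}{2}$ as an explicit function of the position data $u$, $\rho$, $\varphi$ alone, so $\dot\sigma_2=0$ and $\dot g=0$ yield the pointwise first-order constraint \eqref{5.2} on $\dot u$ and $\dot\varphi$. Substituting $\varphi\dot\varphi=f\dot u-\nabla u\cdot\nabla\dot u$ from \eqref{1.3} and $\varphi^2=2\rho-|\nabla u|^2$ from \eqref{0.1}, multiplying by $\dot u$, integrating by parts once (using $\Delta u=nf-H\varphi$), and invoking the universal identity \eqref{2.10} collapses everything to
\begin{equation*}
0=\int_\Sigma \Phi H\varphi\,\dot u^2 .
\end{equation*}
Since $\Phi H\varphi$ is continuous and nowhere zero on the connected $\Sigma$, it has a fixed sign, so $\dot u\equiv 0$; then \eqref{1.3} gives $\dot\varphi=0$ (as $\varphi\neq 0$) and \eqref{1.4} gives $\dot h_{ij}=0$. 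This is why the theorem needs only $\Phi\neq 0$, $H\neq 0$, $\varphi\neq 0$ and none of the structural hypotheses of Section~\ref{sec-inf-rigi}.
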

\begin{proof}
We use the same notations in Section \ref{sec-inf-rigi}. 
By the assumptions, 
\begin{equation}\label{5.1}
\dot{\sigma}_2 (h) =\left.\frac{d\sigma_2(h)}{dt}\right|_{t=0}=0; \
\dot{g}_{ij}=\left.\frac{d g_{ij}}{dt}\right|_{t=0}=0.
\end{equation}
Hence, the linearization of \eqref{2.5} gives 
\begin{eqnarray}\label{5.2}
\frac{n}{2}f_{uu}\dot{u}-(n-2)(f\Phi+\rho\Phi_u)\dot{u}-2\varphi\dot{\varphi}\Phi-\varphi^2\Phi_{u}\dot{u}=0.
\end{eqnarray}
Multiplying $\dot{u}$ in  both sides of \eqref{5.2}  and  integrating on $(\Sigma, g)$, we have 
\begin{equation}
\begin{split}
0 =&\frac{n}{2}\int_\Sigma f_{uu}\dot{u}^2-2\int_\Sigma (f\dot{u}-\nabla u\cdot\nabla\dot{u})\Phi\dot{u}-\int_\Sigma (2\rho-|\nabla u|^2)\Phi_{u}\dot{u}^2 \\
&-(n-2)\int_\Sigma (f\Phi+\rho\Phi_u)\dot{u}^2 \\
=&\frac{n}{2}\int_\Sigma (f_{uu}-2f\Phi-2\rho\Phi_{u})\dot{u}^2+\int_\Sigma \Phi \nabla u\cdot\nabla \dot{u}^2+\int_\Sigma |\nabla u|^2\Phi_{u}\dot{u}^2 \\
=&\frac{n}{2}\int_\Sigma (f_{uu}-2f\Phi-2\rho\Phi_{u})\dot{u}^2-\int_\Sigma \Phi (\Delta u) \dot{u}^2 \\
=&\frac{n}{2}\int_\Sigma (f_{uu}-2f\Phi-2\rho\Phi_{u})\dot{u}^2+\int_\Sigma \Phi (H\varphi-nf)\dot{u}^2 \\
=&\frac{n}{2}\int_\Sigma (f_{uu}-4f\Phi-2\rho\Phi_{u})\dot{u}^2+\int_\Sigma \Phi H\varphi\dot{u}^2 .
\end{split}
\end{equation}
 Thus, by \eqref{2.10}, we have $\dot{u}=0$ 
since $ \Phi H \varphi \neq 0 $.  In view of \eqref{1.3} and \eqref{1.4},
we conclude  $\dot{h}_{ij}=0$. 
\end{proof}

\begin{remark}
Unlike the fixed mean curvature problem, 
Theorem \ref{theo-inf-sigma2} does not need 
the assumption \eqref{eq-super-static}, 
and the Codazzi equations are not used in the  proof. 
\end{remark}

By imposing a stronger assumption on $\Phi$,  
we can prove the global rigidity result stated in Theorem \ref{theo-intro-sigma-2}.

\begin{theo}
Let $(N, ds^2) = \left( I \times \mathbb{S}^n, \frac{1}{f^2(r)}dr^2+r^2d\sigma \right)$
be a warped product space  with $\Phi\Phi_u>0$. 
Let $g$ be a Riemannian metric on   $ \Sigma =  \mathbb{S}^n$. 
Suppose  $(\Sigma, g)$ can be isometrically  immersed into $(N, d s^2)$ as 
two hypersurfaces $M$ and $\tilde M$. If $M$ and $ \tilde M$ have the same $\sigma_2$-curvature, 
then they differ by a rotation of the ambient space $(N, d s^2)$.
 \end{theo}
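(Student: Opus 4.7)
The plan is to proceed much more directly than the integration-by-parts approach of Section \ref{sec-global-rigi}, by exploiting the fact that the $\sigma_2$-Gauss identity reduces the matching condition to a first-order pointwise identity in the radial function $u$, which then yields to the maximum principle. First I would derive this identity: since the immersions are isometric, $R=\tilde R$, so subtracting \eqref{2.5} for $M$ and $\tilde M$ and dividing by $(n-1)$ gives
\[
\tfrac{n}{2}\bigl(f_u(\tilde u)-f_u(u)\bigr)-(n-2)\bigl(\tilde\rho\Phi(\tilde u)-\rho\Phi(u)\bigr)-\bigl(\tilde\varphi^2\Phi(\tilde u)-\varphi^2\Phi(u)\bigr)=0.
\]
Substituting $\varphi^2=2\rho-|\nabla u|^2$ and $\tilde\varphi^2=2\tilde\rho-|\nabla\tilde u|^2$ from \eqref{0.1}, and then using $\rho_u=f$ together with \eqref{2.10} to collapse the combination $\tfrac{n}{2}f_{ss}-n(\rho\Phi)_s$ into $nf\Phi$, this reduces pointwise on $\Sigma$ to
\[
n\int_u^{\tilde u}\!f(s)\Phi(s)\,ds + \Phi(\tilde u)|\nabla\tilde u|^2 - \Phi(u)|\nabla u|^2 = 0. \quad (\ast)
\]

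Next I would run the maximum principle on $W=u-\tilde u$. Under $\Phi\Phi_u>0$, $\Phi$ has constant sign and is strictly monotone in $u$; assume without loss of generality $\Phi>0$ and $\Phi_u>0$. At a maximum point $p_+$ of $W$ on the compact manifold $\Sigma$, $\nabla u=\nabla\tilde u$, so $|\nabla u|^2=|\nabla\tilde u|^2=:a$; setting $u_0=u(p_+)\ge\tilde u(p_+)=\tilde u_0$, $(\ast)$ becomes
\[
n\int_{u_0}^{\tilde u_0}\!f\Phi\,ds + a\bigl(\Phi(\tilde u_0)-\Phi(u_0)\bigr) = 0.
\]
Since $u_0\ge\tilde u_0$, $f\Phi>0$, and $\Phi$ is strictly increasing, both terms are $\le 0$, and their vanishing forces $u_0=\tilde u_0$. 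Hence $\max_\Sigma W=0$, and the mirror argument at a minimum gives $\min_\Sigma W=0$, so $u\equiv\tilde u$ on $\Sigma$.

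With $u\equiv\tilde u$, \eqref{0.1} yields $\varphi^2=\tilde\varphi^2$; fixing compatible orientations of the unit normals gives $\varphi=\tilde\varphi$, and then \eqref{h-phi} forces $h_{ij}=\tilde h_{ij}$ on $\{\varphi\ne 0\}$, hence everywhere by continuity (the case $\varphi\equiv 0$ trivializes, both hypersurfaces being the same constant-$r$ sphere). The concluding step, identical to that of Section \ref{sec-global-rigi}, then shows that $M$ and $\tilde M$ differ only by a rotation of $(N,ds^2)$.

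The main obstacle is conceptual rather than computational: recognizing that, unlike in the mean-curvature case, the $\sigma_2$-Gauss identity is first-order in the radial function $u$ and therefore amenable to a pointwise maximum-principle argument. Once $(\ast)$ is obtained, the proof is essentially automatic and bypasses entirely the delicate Codazzi-driven integration-by-parts machinery needed in Section \ref{sec-global-rigi}; the role of the hypothesis $\Phi\Phi_u>0$ is precisely to make the two summands in $(\ast)$ carry the same sign at extremal points of $W$.
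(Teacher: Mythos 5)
Your proposal is correct and takes essentially the same route as the paper: the paper likewise subtracts the two $\sigma_2$-Gauss identities \eqref{3.2}, uses \eqref{0.1} and \eqref{2.10} to reduce to the pointwise first-order identity $n\int_u^{\tilde u}f(s)\Phi(s)\,ds=|\nabla u|^2\Phi(u)-|\nabla\tilde u|^2\Phi(\tilde u)$ (its \eqref{eq-u-ud}, written with the gradient term factored as $\nabla(u-\tilde u)\cdot\nabla(u+\tilde u)\Phi(u)$), and evaluates it at the extrema of $u-\tilde u$ to conclude $u\equiv\tilde u$ from $\Phi\Phi_u>0$. The only cosmetic difference is that your intermediate bookkeeping at the critical point need not assume $u_0\ge\tilde u_0$ there --- the same sign argument rules out either strict inequality --- but this does not affect the validity of the argument.
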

 \begin{proof}
 We make  use of  the maximum principle. 
 To illustrate the idea, we first give another proof of the infinitesimal rigidity in this setting.
Using  \eqref{0.1}, \eqref{2.10} and \eqref{1.3},   we can rewrite \eqref{5.2} as 
 \be
 nf\Phi\dot{u}+2\nabla u\cdot\nabla\dot{u}\Phi+|\nabla u|^2\Phi_u\dot{u}=0.
 \ee
  At the maximum and minimum  points of the function $\dot{u}$, we have  
 $\nabla \dot{u}=0$, which implies $\dot{u}=0$ at these  points by the assumption $ \Phi \Phi_u > 0$. 
 Thus, we have $\dot{u}=0$ everywhere on $\Sigma$. 
 
 To prove the global rigidity, we use the same notations from Section \ref{sec-global-rigi}.
By \eqref{3.2}, we have 
 $$\frac{n}{2}f_u-(n-2)\rho\Phi-\varphi^2\Phi=\frac{n}{2}f_u(\tilde{u})-(n-2)\tilde{\rho}\Phi(\tilde{u})-\tilde{\varphi}^2\Phi(\tilde{u}).$$ 
We can rewrite it as 
 \begin{equation}
 \begin{split}
  &\frac{n}{2}\int^{\tilde{u}}_uf_{ss}(s)ds \\
  = & \ n\int_{u}^{\tilde{u}}(\rho\Phi)_s ds+ |\nabla u|^2\Phi(u)-|\nabla\tilde{u}|^2\Phi(\tilde{u}) \\
  = & \ n\int^{\tilde{u}}_u\left(f\Phi+\rho\Phi_s\right)ds+\nabla(u-\tilde{u})\cdot\nabla(u+\tilde{u})\Phi(u)+|\nabla\tilde{u}|^2(\Phi(u)-\Phi(\tilde{u})) ,
  \end{split}
  \end{equation}
  which together with \eqref{2.10}  implies
  \be  \label{eq-u-ud} 
  n\int^{\tilde{u}}_uf\Phi d s = \nabla(u-\tilde{u})\cdot\nabla(u+\tilde{u})\Phi(u) 
  -|\nabla\tilde{u}|^2\int^{\tilde{u}}_u\Phi_sds.
  \ee
  At the maximum and minimum  points of the function $u-\tilde{u}$, we have
   $\nabla(u-\tilde{u})=0$. Thus, \eqref{eq-u-ud} implies   $u=\tilde{u}$ at these  points 
   since $ \Phi \Phi_u > 0 $.
   Therefore,  $u=\tilde{u}$ on the entire $\Sigma$, which implies 
   $M$ and $ \tilde M$ differ by a rotation of $(N,d s^2)$.
 \end{proof}

The techniques used in Sections \ref{sec-inf-rigi} and \ref{sec-global-rigi} 
indeed relate to a revisit of 
the rigidity of isometric embeddings of surfaces   into space forms. 
We give a brief review of the history of this rigidity problem. 
It is known that the infinitesimal rigidity of closed convex surfaces in Euclidean spaces was shown by 
Cohn-Vossen \cite{CV2} and was simplified by Blaschke  \cite{B1} using Minkowski identities.  The infinitesimal rigidity in hyperbolic spaces was discussed by Lin-Wang \cite{LW}. In \cite{Li-Wang}, the first and the third authors gave an alternative proof of the infinitesimal rigidity for convex surfaces in space forms. The global rigidity was obtained by Cohn-Vossen \cite{CV1} for convex surfaces  in Euclidean spaces. In space forms, these rigidities  are also known valid (cf. \cite{D, GS}).

Below we give an alternate proof of the global rigidity using the methods from Sections \ref{sec-inf-rigi} and \ref{sec-global-rigi}.

\begin{theo} \label{theo-rigidity-space-form}
Suppose $(N, ds^2)$ is a $3$-dimensional  space form.
Let $ g $ be a metric on the $2$-sphere $ \mathbb{S}^2$.
If $(\mathbb{S}^2, g)$ can be isometrically  embedded into $(N, d s^2)$ as a  strictly convex surface $\Sigma$, then the infinitesimal rigidity and global rigidity of $\Sigma$ hold.
\end{theo}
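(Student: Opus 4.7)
The plan is to apply the two integral identities developed in Sections \ref{sec-inf-rigi} and \ref{sec-global-rigi} to the space-form case ($n=2$), exploiting the fact that $\Phi \equiv 0$ identically in space forms.

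First I would realize $(N, ds^2)$ as a warped product $\left(I \times \mathbb{S}^2, f^{-2} dr^2 + r^2 d\sigma\right)$ with $f(r) = \sqrt{1 - \kappa r^2}$, where $\kappa$ is the constant sectional curvature. Then $(f^2 - 1)/\rho = -2\kappa$ is constant, so from the identity $\Phi = \tfrac14\bigl((f^2-1)/\rho\bigr)_\rho$ we get $\Phi \equiv 0$. Because $\Sigma$ is strictly convex, after applying an ambient isometry I may assume the polar origin lies in the open convex region enclosed by $\Sigma$, so that the support function $\varphi = X \cdot \nu > 0$ on $\Sigma$. By the Gauss equation, the extrinsic Gauss curvature of any isometric immersion of $(\mathbb{S}^2, g)$ into $N$ equals $K_g - \kappa$, so any competing isometric embedding $\tilde\Sigma$ is automatically strictly convex as well, and a further preliminary isometry arranges $\tilde\varphi > 0$ on $\tilde\Sigma$ with respect to the same polar origin.

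For the infinitesimal rigidity, I would plug into identity \eqref{1.19} a positive affine function $w = a + bu$, so that $w_{uu} + (n-1)\Phi w = 0$ holds automatically. The third integral in \eqref{1.19} carries an explicit factor of $\Phi$ and therefore vanishes, and the identity collapses to
$$\int_\Sigma \sum_{i,j} \dot h_{ij}^2 \, \varphi\, w = 0.$$
Since $\varphi w > 0$ pointwise, this forces $\dot h_{ij} \equiv 0$.

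For the global statement, I would use the same sort of positive linear $w$ on the combined $u$-range of $M$ and $\tilde M$ in identity \eqref{3.16}. Every integral there except the first is proportional to an integral of $\Phi$ (through the integrands $w_{ss}+(n-1)\Phi w$, $f(s)\Phi(s)$, $w(s)\Phi(s)$), and so each vanishes, leaving
$$\tfrac12 \int_\Sigma \bigl( w(u)\tilde\varphi + w(\tilde u)\varphi \bigr) \sum_{i,j} v_{ij}^2 = 0.$$
Positivity of $\varphi$, $\tilde\varphi$ and $w$ forces $v_{ij} = \tilde h_{ij} - h_{ij} \equiv 0$. Since $\Sigma$ and $\tilde\Sigma$ then share both their first and second fundamental forms in a space form, the fundamental theorem of hypersurfaces in space forms gives that they differ by an ambient isometry.

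The main obstacle is arranging simultaneously the positivity of both support functions relative to a common polar origin. In $\mathbb{R}^3$ and $\mathbb{H}^3$ this is routine because each strictly convex closed surface bounds a genuine convex region and any interior point may be sent to the polar origin by an ambient isometry. In the spherical space form one must restrict attention to convex surfaces lying in an open hemisphere, which is the natural domain where the warped-product model with positive support functions is available; there the argument above applies verbatim.
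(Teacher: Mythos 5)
Your proposal does not prove the theorem that is actually stated. Theorem~\ref{theo-rigidity-space-form} is the classical Cohn--Vossen rigidity: two isometric embeddings of a strictly convex $(\mathbb{S}^2,g)$ into a space form are congruent, with \emph{no hypothesis on the mean curvature}. The identities \eqref{1.19} and \eqref{3.16} that you propose to specialize to $\Phi\equiv 0$ are derived in Sections~\ref{sec-inf-rigi} and \ref{sec-global-rigi} under the additional standing assumptions $\dot H=0$ (respectively $H=\tilde H$): the derivation of \eqref{1.11} drops the terms $\int_\Sigma \dot u\, w\,\dot H$ and $\int_\Sigma w_u \dot u f \dot H$, replaces $\sum_{i}\dot h_{ij,i}$ by the right side of \eqref{1.8} (which omits $\dot H_j$), and uses $\dot\varphi\Delta u-\varphi\Delta\dot u=n(f\dot\varphi-\varphi f_u\dot u)$, which again needs $\dot H=0$; none of these discarded terms carries a factor of $\Phi$, so they do not disappear in a space form. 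Likewise \eqref{3.1-g}, \eqref{3.5} and hence \eqref{3.16} presuppose $H=\tilde H$. So your argument, even granting the correct observations that $\Phi\equiv 0$ and that an affine $w=a+bu$ solves $w_{uu}+(n-1)\Phi w=0$, only establishes rigidity of isometric embeddings \emph{with the same mean curvature} --- essentially the space-form instance of Theorem~\ref{theo-intro} --- which is strictly weaker than the stated theorem.

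The paper's proof avoids this by exploiting a structure special to $n=2$ that your approach does not use: in a space form the Gauss equation reads $\det h/\det g=K+k$, so its linearization gives the cofactor identity $h_{11}\dot h_{22}+h_{22}\dot h_{11}-2h_{12}\dot h_{12}=0$ (no mean curvature enters), and strict convexity then forces the pointwise sign $\dot h_{11}\dot h_{22}-\dot h_{12}^2\le 0$. Integration by parts against the cofactor of $\dot h$ (using only the Codazzi equations) yields $\int_\Sigma(\dot h_{11}\dot h_{22}-\dot h_{12}^2)\varphi=0$, i.e.\ the quantity controlled is $\det\dot h$ rather than $|\dot h|^2$; the global case is handled the same way with $v=\tilde h-h$. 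To repair your proposal you would either have to carry the $\dot H$-terms through \eqref{1.11}--\eqref{1.19} and show they cancel (they do not), or switch to the determinant-type integrand as the paper does. Your remarks on choosing the origin so that $\varphi,\tilde\varphi>0$, and on restricting to a hemisphere in the spherical case, are sensible but do not address this central gap.
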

\begin{proof}
In a space form with constant sectional curvature  $ - k$, we have 
$$\Phi=0, f_u=k. $$ Thus, the Gauss equation becomes 
$$\frac{\det h}{\det g}=K+k.$$ 
By ``translating" $\Sigma$ in $(N, ds^2)$, we can always assume that 
the support function  $ \varphi$ of $\Sigma$ is  positive since $ \Sigma$ is strictly convex.     

First we consider the infinitesimal rigidity. The linearization of the Gauss-Codazzi equations gives 
\begin{eqnarray}\label{6.1}
&h_{11}\dot{h}_{22}+h_{22}\dot{h}_{11}-2h_{12}\dot{h}_{12}=0,\\
&\dot{h}_{ij,k}=\dot{h}_{ik,j}.
\end{eqnarray}
By \eqref{1.4}, we have
$$\dot{u}_{ij}=k\dot{u}\delta_{ij}-\dot{h}_{ij}\varphi-h_{ij}\dot{\varphi}.$$ 
Integrating by parts, we have
\begin{eqnarray}\label{6.3}
\ \ 0=\int_{\Sigma} \Big[(f(u)\dot{u}_1)_1\dot{h}_{22}+(f(u)\dot{u}_2)_2\dot{h}_{11}-(f(u)\dot{u}_1)_2\dot{h}_{21}-(f(u)\dot{u}_2)_1\dot{h}_{12}\Big].
\end{eqnarray}
On the other hand, we have 
$$(f(u)\dot{u}_i)_j=f_uu_j\dot{u}_i+f(u)\dot{u}_{ij}=ku_j\dot{u}_i+f(u)(k\dot{u}\delta_{ij}-\dot{h}_{ij}\varphi-h_{ij}\dot{\varphi}).$$
Inserting the above equality into \eqref{6.3}, we have 
\begin{eqnarray}\label{6.4}
\\
&&\int_{\Sigma}\Big[ku_1\dot{u}_1\dot{h}_{22}+kf(u)\dot{u}\dot{h}_{22}+ku_2\dot{u}_2\dot{h}_{11}+kf(u)\dot{u}\dot{h}_{11}-ku_1\dot{u}_2\dot{h}_{12}-ku_2\dot{u}_1\dot{h}_{21}\Big]\nonumber\\
&&-2\int_{\Sigma}(\dot{h}_{11}\dot{h}_{22}-\dot{h}_{12}\dot{h}_{21})\varphi=0\nonumber.
\end{eqnarray}
Here we have used \eqref{6.1}.  Integrating  by part, we also have 
\begin{eqnarray}\label{6.5}
&&\int_{\Sigma}\Big[ku_1\dot{u}_1\dot{h}_{22}+ku_2\dot{u}_2\dot{h}_{11}-ku_1\dot{u}_2\dot{h}_{12}-ku_2\dot{u}_1\dot{h}_{21}\Big]\\
&=&-k\int_{\Sigma}\dot{u}\Big[({u}_1\dot{h}_{22})_1+({u}_2\dot{h}_{11})_2-({u}_2\dot{h}_{12})_1-({u}_1\dot{h}_{21})_2\Big]\nonumber\\
&=&-k \int_{\Sigma}\dot{u}\Big[u_1(\dot{h}_{221}-\dot{h}_{122})+u_2(\dot{h}_{112}-\dot{h}_{211})+u_{11}\dot{h}_{22}+u_{22}\dot{h}_{11}-2u_{12}\dot{h}_{12}\Big]\nonumber\\
&=&-k \int_{\Sigma}\dot{u}\Big[\dot{h}_{22}(f(u)-h_{11}\varphi)+\dot{h}_{11}(f(u)-h_{22}\varphi)-2\dot{h}_{12}(-h_{12}\varphi)\Big] \nonumber\\
&=&-k\int_{\Sigma}\dot{u}(\dot{h}_{11}+\dot{h}_{22})f(u).\nonumber
\end{eqnarray}
Combing \eqref{6.4} and \eqref{6.5}, we obtain 
\begin{eqnarray}\label{6.6}
\int_{\Sigma}(\dot{h}_{11}\dot{h}_{22}-\dot{h}^2_{12})\varphi=0.
\end{eqnarray}
At any  given point,  by computing in a normal coordinate chart that 
diagonalizes $ h_{ij}$, it is easily seen that  \eqref{6.1}, together with the convexity of $\Sigma$,  implies 
$$\dot{h}_{11}\dot{h}_{22}-\dot{h}_{12}^2\leq 0.$$  
Hence, from   \eqref{6.6}, we obtain $\dot{h}=0$.

Next, we modify the previous argument to prove the global rigidity.
Suppose $\Sigma$ and $\tilde{\Sigma}$ are two  isometrically  embedded convex surfaces. 
Suppose their second fundamental forms are $h$ and $\tilde{h}$, respectively. 
Then, $v=\tilde{h}-h$ satisfies 
\begin{eqnarray}
&&h_{11}v_{22}+h_{22}v_{11}-2h_{12}v_{12}+v_{11}v_{22}-v_{12}^2=0,\nonumber\\
&&\tilde{h}_{11}v_{22}+\tilde{h}_{22}v_{11}-2\tilde{h}_{12}v_{12}-v_{11}v_{22}+v_{12}^2=0\nonumber, 
\end{eqnarray}
by the Gauss equations. 
Thus, we have 
\begin{eqnarray}\label{6.1-g}
(h_{11}+\tilde{h}_{11})v_{22}+(h_{22}+\tilde{h}_{22})v_{11}-2(h_{12}+\tilde{h}_{12})v_{12}=0.
\end{eqnarray}
If we  compute  in a normal coordinate chart that 
diagonalizes  $ h + \tilde{h} $, then \eqref{6.1-g} and
 the convexity of $ \Sigma$ and $ \tilde \Sigma$ show
 $v_{11}$ and $v_{22}$ have different sign. Thus, we have 
\begin{eqnarray}\label{6.8}
v_{11}v_{22}-v_{12}^2\leq 0.
\end{eqnarray}
  The Codazzi equations imply 
\begin{eqnarray}
v_{ij,k}=v_{ik,j}.
\end{eqnarray} 
Integrating  by part, we have 
\begin{eqnarray}\label{6.10}
 0&=&\int_{\Sigma} \Big[(f(\tilde{u})u_1-f(u)\tilde{u}_1)_1v_{22}+(f(\tilde{u})u_2-f(u)\tilde{u}_2)_2v_{11}\\
&& -(f(\tilde{u})u_1-f(u)\tilde{u}_1)_2v_{21}-(f(\tilde{u})u_2-f(u)\tilde{u}_2)_1v_{12}\Big].\nonumber
\end{eqnarray}
On the other hand, we have 
\begin{eqnarray}
&&(f(\tilde{u})u_i-f(u)\tilde{u}_i)_j\nonumber\\
&=&f(\tilde{u})u_{ij}-f(u)\tilde{u}_{ij} +k(\tilde{u}_{j}u_i-u_j\tilde{u}_i)\nonumber\\
&=&f(\tilde{u})(f(u)\delta_{ij}-h_{ij}\varphi)-f(u)(f(\tilde{u})\delta_{ij}-\tilde{h}_{ij}\tilde{\varphi})+k(\tilde{u}_{j}u_i-u_j\tilde{u}_i)\nonumber\\
&=&-f(\tilde{u})h_{ij}\varphi+f(u)\tilde{h}_{ij}\tilde{\varphi}+k(\tilde{u}_{j}u_i-u_j\tilde{u}_i)\nonumber.
\end{eqnarray}
Inserting the above equality into \eqref{6.10}, we obtain 
\begin{eqnarray}\label{6.11}
\\
0&=&\int_{\Sigma}\Big[-f(\tilde{u})(h_{11}v_{22}+h_{22}v_{11}-2h_{12}v_{12})\varphi+f(u)(\tilde{h}_{11}v_{22}+\tilde{h}_{22}v_{11}-2\tilde{h}_{21}v_{12})\tilde{\varphi}\Big]\nonumber\\
&=&\int_{\Sigma}(f(\tilde{u})\varphi+f(u)\tilde{\varphi})(v_{11}v_{22}-v_{12}^2)\nonumber.
\end{eqnarray}
Here we have used \eqref{6.1-g}.  
Combing \eqref{6.8} and \eqref{6.11}, we conclude  $v=0$.  
Thus, $ \Sigma$ and $ \tilde \Sigma$ have the same first and second fundamental forms. 
Since $(N, d s^2) $  is a space form, we conclude that 
$ \Sigma$ and $ \tilde \Sigma$ are same up to an isometry of $(N, d s^2)$.
\end{proof}

\vspace{.3cm}

\noindent {\bf Acknowledgement}. The main part of this work was completed when the first author was visiting Fudan University in 2017. He would like to thank Fudan University for the hospitality and support.
The authors also thank Siyuan Lu for helpful conversations.

\end{document}